\setlist[1]{wide}
\setlist[2]{leftmargin=15mm}
\setlist[enumerate]{label=\rm({\roman*}), }
\setlist[enumerate,2]{label=\rm{(\arabic*)}}
\setlist[itemize]{label=\raisebox{0.25ex}{\tiny$\bullet$}}
\theoremstyle{plain}
\newtheorem{theorem}{Theorem}[section]
\newtheorem{lemma}[theorem]{Lemma}
\newtheorem{proposition}[theorem]{Proposition}
\theoremstyle{definition}
\newtheorem{maintheorem}{Theorem}
\renewcommand\k{\mathbf{ \textbf k}}
\newcommand{\p}{\mathbb{P}}
\newcommand\bP{{\mathbb P}}
\newcommand\bZ{{\mathbb Z}}
\newcommand\cI{{\mathcal I}}
\newcommand\cN{{\mathcal N}}
\newcommand\cO{{\mathcal O}}
\newcommand\cT{{\mathcal T}}
\newcommand\gp{{\rm gp}}
\newcommand\id{{\rm id}}
\newcommand\pr{{\rm pr}}
\DeclareMathOperator\Aut{Aut}
\newcommand\Der{{\rm Der}}
\newcommand\End{{\rm End}}
\newcommand\Ker{{\rm Ker}}
\newcommand\Lie{{\rm Lie}}
\numberwithin{equation}{section}
\title[Abelian varieties as automorphism groups]
{Abelian varieties as automorphism groups of smooth projective
varieties in arbitrary characteristics}
\author{J\'er\'emy Blanc}
\address{J\'er\'emy Blanc, Universit\"at Basel, 
Departement Mathematik und Informatik, Spiegelgasse 1, 
CH-4051 Basel, Switzerland. }
\author{Michel Brion}
\address{Michel Brion, Universit\'e Grenoble Alpes, 
Institut Fourier, CS 40700, 38058 Grenoble Cedex 9, France}
\date{}
\thanks{The first author is supported by the Swiss National 
Science Foundation Grant ``Birational transformations of threefolds'' 
$200020\_178807$.  Both authors thank the referee for a careful
reading and helpful comments.}
\begin{document}

\begin{abstract}
Let $A$ be an abelian variety over an algebraically closed field. We show that
$A$ is the automorphism group scheme of some smooth projective variety if and
only if $A$ has only finitely many automorphisms as an algebraic group.
This generalizes a result of Lombardo and Maffei for complex abelian varieties.
\end{abstract}

\subjclass[2020]{14K05, 14J50, 14L30, 14M20}
\keywords{Abelian varieties, automorphism group schemes, Albanese morphism}
\maketitle

\tableofcontents
\section{Introduction}
\label{sec:int}

Let $X$ be a projective algebraic variety over an algebraically closed field. 
The automorphism group functor of $X$ is represented by a group scheme 
$\Aut_X$, locally of finite type (see \cite[p.~268]{Grothendieck}
or \cite[Thm.~3.7]{MO}).
Thus, the automorphism group $\Aut(X)$ is the group of $k$-rational 
points of a smooth group scheme that we will still denote by $\Aut(X)$
for simplicity. One may ask which smooth group schemes are obtained 
in this way, possibly imposing some additional conditions on $X$ such 
as smoothness or normality. It is known that every finite group $G$ 
is the automorphism group scheme of some smooth projective curve $X$ 
(see e.g.~the main result of \cite{MR}).
The case of a complex abelian variety $A$ was treated recently by 
Lombardo and Maffei in \cite{LM}; they showed that 
$A = \Aut(X)$ for some complex projective manifold $X$ if and only if 
$A$ has only finitely many automorphisms as an algebraic group.
In this note, we generalize their result as follows:

\begin{maintheorem}\label{thm:main}
Let $A$ be an abelian variety over an algebraically closed field.
Denote by $\Aut_{\gp}(A)$ the group of automorphisms of $A$ as an algebraic group. 

\begin{enumerate}

\item\label{main1}
If $A = \Aut(X)$ for some projective variety $X$, then $\Aut_{\gp}(A)$
is finite.

\item\label{main2}
If $\Aut_{\gp}(A)$ is finite, then there exists a smooth projective
variety $X$ such that $A = \Aut_X$.

\end{enumerate}

\end{maintheorem}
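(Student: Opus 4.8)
\emph{Overall approach.} The plan is to let the Albanese morphism $\alpha_X\colon X\to\Alb(X)$ carry both directions. Recall that every automorphism of an abelian variety $B$ is a translation followed by a group automorphism, so $\Aut(B)=B\rtimes\Aut_{\gp}(B)$; functoriality of the Albanese then gives a homomorphism $\Aut(X)\to\Alb(X)\rtimes\Aut_{\gp}(\Alb(X))$, and since the identity component of the target is $\Alb(X)$ while $\Aut_{\gp}(\Alb(X))$ is discrete, the connected group $\Aut^0(X)$ lands in the translation subgroup via a homomorphism $h\colon\Aut^0(X)\to\Alb(X)$ for which $\alpha_X$ is equivariant.

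\emph{Part (i).} If $A=\Aut(X)$ then $A=\Aut^0(X)$ acts faithfully on $X$; the Nishi--Matsumura theorem (available in arbitrary characteristic by work of Brion) shows that the kernel of $h\colon A\to\Alb(X)$ is affine, hence finite as a subgroup scheme of an abelian variety, so $h$ is an isogeny onto an abelian subvariety $A'\subseteq\Alb(X)$. From here I would follow Lombardo--Maffei to conclude that $\Aut_{\gp}(A)$ is finite; the point that has to be pinned down is how the \emph{absence} of further automorphisms of $X$ — rather than merely the existence of the $A$-action — constrains $A'$ and hence $\Aut_{\gp}(A)$ (one natural input is that $\Aut^0(X)=A$ fixes every ample class on $X$, which forces a polarization onto $A'$). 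I expect this direction to be substantially shorter than (ii).

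\emph{Part (ii).} Here I would write $X$ down. Choose an integer $n\ge 3$, prime to the characteristic, of the form $n=\ell_1\ell_2$ with $\ell_1,\ell_2$ large distinct primes, so that no $u\in\Aut_{\gp}(A)\smallsetminus\{\pm\mathrm{id}\}$ acts on $A[n]$ as a scalar; then pick $P_0\in A[n]$ of order $n$ avoiding the (finitely many, proper) subgroups $\ker(u-[j])\subseteq A[n]$ for $u\ne\pm\mathrm{id}$, which is possible \emph{precisely because} $\Aut_{\gp}(A)$ is finite. Let $F=\langle P_0\rangle\cong\bZ/n\bZ\subseteq A$, let $C$ be a very general cyclic degree-$n$ cover of $\bP^1$ (branched at sufficiently many very general points), so that $C$ is smooth projective of genus $\ge 2$, $C/F=\bP^1$, $\Aut(C)=F$ and $\Hom(\mathrm{Jac}(C),A)=0$, and set
\[
X:=(C\times A)/F,
\]
with $F$ acting diagonally, as the deck group on $C$ and by translation by $P_0$ on $A$. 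The translation action is free, hence so is the diagonal one, so $X$ is smooth projective of dimension $\dim A+1$, the group $A$ acts on $X$ by $a\cdot[c,b]=[c,b+a]$, and this action is faithful because $F$ acts faithfully on $C$. Since $C/F=\bP^1$, the $1$-forms of $C\times A$ descending to $X$ are exactly those from $A$, so $\alpha_X$ is the map $X\to A/F$ and $\Alb(X)\cong A/F$; its fibres are the genus-$\ge 2$ curves $\cong C$, so the affine kernel of $h\colon\Aut^0(X)\to\Alb(X)$ is finite and embeds into $\Aut(C)=F$, forcing $\Aut^0(X)=A$. Moreover $\Aut_X$ is \emph{smooth}, because $H^0(X,T_X)=H^0(C,T_C)^F\oplus H^0(A,T_A)=\Lie(A)$, using $g(C)\ge 2$ and that translations act trivially on invariant vector fields of $A$. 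Finally, any $\varphi\in\Aut(X)$ normalizes $\Aut^0(X)=A$, hence induces $u\in\Aut_{\gp}(A)$, and descends to $\Alb(X)=A/F$, which forces $u(F)=F$, so $u\in\{\pm\mathrm{id}\}$ by the choice of $P_0$; lifting $\varphi$ to the étale cover $C\times A\to X$ and using that $\Aut(C)=F$ is abelian rules out $u=-\mathrm{id}$ (one would need an automorphism of $C$ conjugating the $F$-action to its inverse, impossible for $n\ge 3$), while for $u=\mathrm{id}$ the equivariant $\varphi$ is seen directly to be translation by an element of $A$ (here $\Hom(\mathrm{Jac}(C),A)=0$ enters). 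Hence $\Aut(X)=A$, and together with smoothness, $\Aut_X=A$ as group schemes.

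\emph{Main obstacle.} The step I expect to be hardest is carrying part (ii) through uniformly in arbitrary characteristic: one must keep $n$ prime to $p$ so that $F\hookrightarrow A$ and $C\times A\to X$ is étale; one must establish the smoothness of $\Aut_X$ via the tangent-space computation above, for which there is no analytic shortcut when $\charac k=p>0$; and the genericity choices for $C$ and for $P_0$ — feeding on the finiteness of $\Aut_{\gp}(A)$ supplied by part (i) — have to be set up carefully so that not a single extra automorphism of $A$ as an algebraic group survives on $X$.
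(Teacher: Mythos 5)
Your part (i) is not a proof: after setting up the Albanese morphism and the Nishi--Matsumura reduction you explicitly defer the decisive step (``how the absence of further automorphisms of $X$ constrains $A'$''), and that step is the entire content of the statement --- merely having a faithful $A$-action constrains nothing, since every abelian variety acts faithfully on itself. To use the hypothesis $\Aut(X)=A$ one must \emph{manufacture} automorphisms of $X$ from group automorphisms of $A$ and then observe they are forced to be translations. The paper does this by assuming $\Aut_{\gp}(A)$ infinite, producing a nontrivial $v\in\Aut_{\gp}(A)$ that restricts to the identity on $A[n]$ (the restriction $\Aut_{\gp}(A)\to\Aut_{\gp}(A[n])$ has finite image), invoking Brion's equivariant morphism $X\to A/A[n]$ to write $X\simeq (A\times Y)/A[n]$, and descending $v\times\id$ to some $u\in\Aut(X)=A$; this forces $v-\id$ to factor through $A\to A[n]$, hence $v=\id$, a contradiction. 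Your suggested input (that $\Aut^0(X)$ fixes ample classes, giving a polarization on $A'$) does not produce such automorphisms of $X$, so it cannot by itself bound $\Aut_{\gp}(A)$.

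Your part (ii) is a legitimate variant of Lombardo--Maffei (cyclic $F=\langle P_0\rangle\subset A[n]$ and a curve $C$, versus the paper's $G=A[\ell^m]$ and an explicit rational $Y$), and several steps are sound (freeness and faithfulness, and the computation $H^0(X,T_X)=\Lie(A)$ giving smoothness of $\Aut_X$, which parallels the paper's $H^0(Y,T_Y)^G=0$ argument). But the existence of $C$ is the crux and you only assert it: ``very general'' has no force when $\k=\overline{\bF}_p$, which is countable, so a point avoiding countably many proper closed subsets need not exist; in particular $\Hom(\mathrm{Jac}(C),A)=0$ cannot be obtained this way, and even $\Aut(C)=F$ for a general branch divisor needs a proof --- this is exactly what the paper's Section 5 supplies for its $Y$, via an explicit blow-up of $(\bP^1)^r$ and a cone-of-curves analysis. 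Likewise your choices of $n=\ell_1\ell_2$ and of $P_0$ require arguments you do not give: that no $u\in\Aut_{\gp}(A)\setminus\{\pm\id\}$ acts as a scalar on $A[\ell]$ for large $\ell$, and that the finitely many subgroups $\ker(u-[j])\cap A[n]$ do not cover the order-$n$ elements. Two further repairs: the identification $\Alb(X)=A/F$ via descent of $1$-forms is a characteristic-$0$ argument; in characteristic $p$ argue via morphisms to abelian varieties, using that the fixed part of $\mathrm{Jac}(C)$ under the deck group is finite because $C/F\simeq\bP^1$ (this is the analogue of the paper's use of rationality of $Y$). And the hypothesis $\Hom(\mathrm{Jac}(C),A)=0$ is probably avoidable altogether: lifting $\varphi$ through the fibre product $X\times_{A/F}A\simeq C\times A$ makes the lift automatically compatible with the projection to $A$, which is how the paper treats the corresponding step; eliminating it would also remove the most problematic genericity requirement on $C$.
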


Like in \cite{LM}, the proof of the first assertion is easy, and the 
second one is obtained by constructing $X$ as a quotient 
$(A \times Y)/G$, where $G \subset A$ is a finite subgroup, 
$Y$ is a smooth projective variety such that $G = \Aut_Y$, and 
the quotient is taken for the diagonal action of $G$ on $A \times Y$.
In \cite{LM}, $G$ is a cyclic group of prime order $\ell$, and $Y$
a surface of degree $\ell$ in $\bP^3$ equipped with a free action
of $G$. As the construction of $Y$ does not extend readily to 
prime characteristics, we take for $G$ the $n$-torsion subgroup 
scheme $A[n]$ for an appropriate integer $n$, and for $Y$ 
an appropriate rational variety. 

A different construction of a variety $X$ satisfying the second
assertion has been obtained independently by Mathieu Florence,
see \cite{Florence}; it works over an arbitrary field.
 
Let us briefly describe the structure of this note.  
Section~\ref{sec:prelim} is a short introduction to basic notation 
and reminders on abelian varieties. In Section~\ref{sec:(i)}, 
we take an abelian variety $A$ with $\Aut_{\gp}(A)$ infinite, 
assume that $A = \Aut(X)$ for some projective variety $X$, 
and derive a contradiction. In Section~\ref{sec:(ii)}, we take 
an abelian variety $A$ with $\Aut_{\gp}(A)$ finite and prove that 
for each prime number $\ell$ different from the characteristic 
of the ground field, for each $m\ge 1$ big enough, and for each
smooth rational projective variety $Y$ with $\Aut_Y\simeq A[\ell^m]$, 
one has
\[\Aut_X = A\]
where  $X$ is the smooth projective variety $(A \times Y)/A[\ell^m]$. 
Then, Section~\ref{Sec:Y} is devoted to an explicit construction of $Y$.

\section{Preliminaries and notation}\label{sec:prelim}

We begin by fixing some notation and conventions which will be 
used throughout this note. The ground field $\k$ is algebraically 
closed, of characteristic $p \geq 0$. A variety $X$ is a separated
integral scheme of finite type over~$k$. By a point of $X$,
we mean a $\k$-rational point.

We use \cite{Mumford} as a general reference for abelian varieties.
We denote by~$A$ such a variety of dimension $g \geq 1$, 
with group law $+$ and neutral element $0$. Then
\[ \Aut(A) = A \rtimes \Aut_{\gp}(A), \]
where $A$ acts on itself by translations.  
Moreover, $\Aut_{\gp}(A) = \Aut(A,0)$ (the group of automorphisms 
fixing the neutral element), see \cite[\S 4, Cor.~1]{Mumford}.

For any positive integer $n$, we denote by $A[n]$ the $n$-torsion
subgroup scheme of $A$, i.e., the schematic kernel of 
the multiplication map 
\[ n_A \colon A \longrightarrow A, \quad a \longmapsto n a. \]
Clearly, $A[n]$ is stable by $\Aut_{\gp}(A)$. Also, recall
from \cite[\S 6, Prop.]{Mumford} that $A[n]$ is finite; moreover,
$A[n]$ is the constant group scheme $(\bZ/n)^{2g}$ if $n$ is prime
to $p$.

We denote by 
\[ q \colon A \longrightarrow  A/A[n], \quad a \longmapsto \bar{a} \]
the quotient morphism. Then $n_A$ factors as $q$ followed by an isomorphism 
$A/A[n] \stackrel{\simeq}{\longrightarrow} A$.

\section{Proof of Theorem \ref{thm:main}\ref{main1}}
\label{sec:(i)}

In this section, we choose an abelian variety $A$ such
that $\Aut_{\gp}(A)$ is infinite, and proceed to
the proof of Theorem~\ref{thm:main}\ref{main1}. We will need:

\begin{lemma}\label{lem:ker}
For any positive integer $n$, the kernel of the restriction map
\[ \rho_n \colon \Aut_{\gp}(A) \longrightarrow \Aut_{\gp}(A[n]) \]
is infinite.
\end{lemma}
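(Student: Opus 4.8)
The goal is to show that $\ker(\rho_n)$ is infinite, given that $\Aut_{\gp}(A)$ is infinite. The plan is to exploit the quotient morphism $q\colon A \to A/A[n] \cong A$, which identifies $A/A[n]$ with $A$ itself, and to observe that every automorphism of $A$ as an algebraic group descends through this quotient (since $A[n]$ is characteristic). Concretely, for $\varphi \in \Aut_{\gp}(A)$, the fact that $\varphi(A[n]) = A[n]$ means $\varphi$ induces an automorphism $\bar\varphi$ of $A/A[n]$, and under the canonical isomorphism $A/A[n] \xrightarrow{\sim} A$ coming from $n_A = (\text{iso}) \circ q$, this $\bar\varphi$ corresponds again to an element of $\Aut_{\gp}(A)$ — in fact to $\varphi$ itself, because $n_A \circ \varphi = \varphi \circ n_A$. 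So $n_A$ intertwines the action of $\Aut_{\gp}(A)$ on source and target.

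The key step is then to build, from this, a self-map of $\Aut_{\gp}(A)$ whose image lands in $\ker(\rho_n)$ and which is "large". Here is the mechanism I would use. Since $n_A$ is an isogeny, there is a complementary isogeny: there exists $\psi\colon A \to A$ (namely $\psi = n_A$ again, up to the identification, or more precisely the dual/quasi-inverse isogeny) with $\psi \circ n_A = n^2_A$ or similar; but more robustly, I would argue directly at the level of endomorphism rings. Consider the map $\Aut_{\gp}(A) \to \Aut_{\gp}(A)$, $\varphi \mapsto n_A \varphi\, n_A^{-1}$ — this does not make sense in $\Aut_{\gp}(A)$ since $n_A$ is not invertible, but $\varphi \mapsto$ the automorphism of $A \cong A/A[n]$ induced by $\varphi$ does make sense, and the point is that for this induced automorphism to act trivially on $A[n]$ we need to choose the representatives carefully. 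The cleanest route: a group-scheme automorphism $\varphi$ fixes $A[n]$ pointwise (i.e.\ lies in $\ker \rho_n$) as soon as $\varphi \equiv \id \pmod{n\,\End(A)}$ in the endomorphism ring $\End(A)$, because $A[n] = \ker(n_A)$ and any endomorphism divisible by $n$ kills $A[n]$; hence $\id + n\beta$ restricted to $A[n]$ equals $\id$ for every $\beta \in \End(A)$, and when $\id + n\beta$ happens to be an automorphism it lies in $\ker(\rho_n)$.

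So the argument reduces to: the set of automorphisms of $A$ lying in the coset $\id + n\,\End(A)$ is infinite whenever $\Aut_{\gp}(A)$ is infinite. For this I would use that $\End(A)$ is a finitely generated $\bZ$-module (a standard fact for abelian varieties, \cite[\S 19]{Mumford}) with $n\,\End(A)$ of finite index, so $\Aut_{\gp}(A) \subset \End(A)$ is covered by finitely many cosets of $n\,\End(A)$; since $\Aut_{\gp}(A)$ is infinite, at least one coset $\gamma + n\,\End(A)$ contains infinitely many automorphisms. Then for any two such automorphisms $\varphi_1, \varphi_2$ in that coset, $\varphi_1 \varphi_2^{-1} \in \id + n\,\End(A)$ — here one checks that multiplying out $(\gamma + n\beta_1)(\gamma + n\beta_2)^{-1}$ stays in $\id + n\,\End(A)$, using that $\varphi_2^{-1} \in \End(A)$ and that the coset of $\varphi_2^{-1}$ is $\gamma^{-1} + n\End(A)$ provided $\gamma$ is a unit mod $n$, which it is since $\varphi_2$ is — and all these elements $\varphi_1\varphi_2^{-1}$ are automorphisms lying in $\ker(\rho_n)$, giving infinitely many.

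The main obstacle is the bookkeeping in the last step: verifying that $\varphi_1 \varphi_2^{-1} \in \id + n\,\End(A)$ and that this forces triviality on $A[n]$. The cleanest phrasing is: reduction mod $n$ gives a ring homomorphism $\End(A) \to \End(A)/n\,\End(A)$, under which $\rho_n$ factors (since the action on $A[n] = \ker(n_A)$ only depends on the class mod $n$); then $\ker(\rho_n) \supseteq \Aut_{\gp}(A) \cap (\id + n\,\End(A))$, and the finiteness of $\End(A)/n\End(A)$ together with infiniteness of $\Aut_{\gp}(A)$ forces this intersection (or a suitable translate collapsing to it) to be infinite by a pigeonhole-plus-group argument. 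I would isolate "$\rho_n$ factors through $\End(A)/n\End(A)$" as the conceptual heart and present the rest as a short counting argument.
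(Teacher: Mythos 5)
Your proposal is correct and rests on the same mechanism as the paper's proof: both reduce the restriction map modulo $n$ in the endomorphism ring, using that $\End_{\gp}(A)$ is a finitely generated abelian group so that everything relevant mod $n$ is finite, and then conclude from the standing hypothesis that $\Aut_{\gp}(A)$ is infinite. The only difference is cosmetic --- the paper phrases the conclusion as ``the image of $\rho_n$ is finite, hence the kernel is infinite,'' whereas you run an explicit coset/pigeonhole argument producing elements $\varphi_1\varphi_2^{-1}\in \ker(\rho_n)$, which amounts to the same thing.
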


\begin{proof}
Note that $\rho_n$ extends to a ring homomorphism
\[ \sigma_n \colon \End_{\gp}(A) \longrightarrow \End_{\gp}(A[n]) \]
with an obvious notation. Moreover, the image of $\sigma_n$
is a finitely generated abelian group (as a quotient of 
$\End_{\gp}(A)$) and is killed by $n$; thus, this image
is finite. So the image of $\rho_n$ is finite as well. 
\end{proof}

We assume, for contradiction, the existence of a projective variety $X$ 
such that $A = \Aut(X)$; in particular, $X$ is equipped with a faithful 
action of $A$. By \cite[Lem.~3.2]{Brion}, there exist a finite subgroup
scheme $G$ of $A$ and an $A$-equivariant morphism
$f \colon X \to A/G$, where $A$ acts on $A/G$ via the quotient map.
Denote by $n$ the order of $G$; then $G$ is a subgroup scheme 
of $A[n]$. By composing $f$ with the natural map $A/G \to A/A[n]$,
we may thus assume that $G = A[n]$. 

We now adapt the proof of \cite[Thm.~2.2]{LM}.
Let $Y$ be the schematic fiber of $f$ at $\bar{0}$. Then $Y$
is a closed subscheme of $X$, stable by the action of $A[n]$.
Form the cartesian square 
\[ \xymatrix{
X' \ar[r]^-{f'} \ar[d]_{r} & A \ar[d]_{q}\\
X \ar[r]^-{f} & A/A[n].\\
} \]
Then $X'$ is a projective scheme equipped with an action of $A$;
moreover, $f'$ is an $A$-equivariant morphism and its fiber at $0$
may be identified to $Y$. It follows that the morphism
\[ A \times Y \longrightarrow X', \quad (a,y) \longmapsto a \cdot y \]
is an isomorphism with inverse 
\[ X' \longrightarrow A \times Y, \quad 
x' \longmapsto (f'(x'), -f'(x') \cdot x'). \] 
So we may identify $X'$ with $A \times Y$; then $r$ is invariant
under the action of $A[n]$ via $g \cdot (a,y) = (a - g, g \cdot y)$.
Since $q$ is an $A[n]$-torsor, so is $r$. In particular, 
$X = (A \times Y)/A[n]$ and the stabilizer in $A$ of any $y \in Y$
is a subgroup scheme of $A[n]$.

By Lemma~\ref{lem:ker}, we may choose a nontrivial $v \in \Aut_{\gp}(A)$
which restricts to the identity on $A[n]$. Then $v \times \id$ is 
an automorphism of $A \times Y$ that commutes with the action of 
$A[n]$. Since $r$ is an $A[n]$-torsor and hence a categorical quotient, 
it follows that $v \times \id \in \Aut(A \times Y)$ 
factors through a unique $u \in \Aut(X)$, which satisfies
$u(a \cdot y) = v(a) \cdot y$ for all $a \in A$ and $y \in Y$.

As $\Aut(X)=A$, we have $u\in A$. For any $a, b \in A$ and $y \in Y$, 
we have $(a + b) \cdot y = b \cdot (a \cdot y)$.  
Choosing $b=u$ in the above formula yields 
$(a + u) \cdot y = u \cdot (a \cdot y) = v(a) \cdot y$. Thus,
 $v(a) - a - u$ fixes every point of $Y$ for any $a \in A$. 
Taking $a = 0$, it follows that $u$ fixes $Y$ pointwise,
and hence $u \in A[n]$. So $v(a) - a \in A[n]$ for any $a \in A$,
i.e., $v - \id$ factors through a homomorphism $A \to A[n]$.

Since $A$ is smooth and connected, 
it follows that $v - \id = 0$, a contradiction.

\section{Proof of Theorem \ref{thm:main}\ref{main2}: first steps}
\label{sec:(ii)}

We assume from now on that $\Aut_{\gp}(A)$ is finite. 
Recall that $q\colon A\to A/A[n]$ is the quotient morphism 
(see Section~\ref{sec:prelim}).

\begin{lemma}\label{lem:inj}$ $

\begin{enumerate}

\item\label{inj1} The map 
$q_* \colon \Aut_{\gp}(A) \to \Aut_{\gp}(A/A[n])$ 
is an isomorphism for any integer $n \geq 1$.

\item\label{inj2} Let $\ell \neq p$ be a prime number. Then
$\rho_{\ell^m} \colon \Aut_{\gp}(A) \to \Aut_{\gp}(A[\ell^m])$
is injective for $m \gg 0$.

\end{enumerate}

\end{lemma}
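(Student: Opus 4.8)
For part \ref{inj1}, the key point is that $n_A$ factors as $q$ followed by an isomorphism $A/A[n] \xrightarrow{\sim} A$, as recalled in Section~\ref{sec:prelim}. Since every element of $\Aut_{\gp}(A)$ commutes with $n_A$ (because $n_A$ is a group homomorphism, it is equivariant for any automorphism of $A$ as an algebraic group), one checks that $q_*$ is well-defined: any $v \in \Aut_{\gp}(A)$ descends through $q$ to an automorphism of $A/A[n]$, since $v(A[n]) = A[n]$. The inverse is obtained by transporting $\Aut_{\gp}(A/A[n])$ back to $\Aut_{\gp}(A)$ via the isomorphism $A/A[n] \simeq A$; the compatibility of this isomorphism with $q$ and $n_A$ gives that $q_*$ is a bijective group homomorphism, hence an isomorphism. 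This step is essentially formal.

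For part \ref{inj2}, I would argue by a counting/finiteness argument. The group $\Aut_{\gp}(A)$ is finite by hypothesis, so it suffices to show that $\bigcap_{m \geq 1} \ker(\rho_{\ell^m})$ is trivial, since a decreasing intersection of subgroups of a finite group stabilizes, and then $\ker(\rho_{\ell^m})$ equals this intersection for $m \gg 0$. So the heart of the matter is: if $v \in \Aut_{\gp}(A)$ restricts to the identity on $A[\ell^m]$ for every $m$, then $v = \id$. Consider $w := v - \id \in \End_{\gp}(A)$. By assumption $w$ kills $A[\ell^m]$ for all $m$, so $w$ kills the whole $\ell$-power torsion subgroup, equivalently $w$ factors through $\ell^m_A$ for every $m$: there exist $w_m \in \End_{\gp}(A)$ with $w = \ell^m_A \circ w_m = \ell^m w_m$. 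Passing to the Tate module (or rational Tate module) $V_\ell(A) = T_\ell(A) \otimes \bQ_\ell$, the induced map $V_\ell(w)$ is divisible by $\ell^m$ in $\End(T_\ell(A))$ for all $m$, hence $V_\ell(w) = 0$; since the representation of $\End_{\gp}(A)$ on $V_\ell(A)$ is faithful (as $\ell \neq p$), we get $w = 0$, i.e.\ $v = \id$.

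The main obstacle is the last step — concluding $w=0$ from divisibility by all powers of $\ell$. One clean way to avoid invoking Tate modules explicitly is: $w = \ell^m w_m$ forces, for the degree (a positive-definite-like quadratic form / polynomial function on $\End_{\gp}(A) \otimes \bQ$), a divisibility that is impossible unless $w = 0$; alternatively, one notes $\End_{\gp}(A)$ is a finitely generated free $\bZ$-module, and an element divisible by $\ell^m$ for all $m$ in such a module is zero. This last phrasing is the simplest: $w \in \bigcap_m \ell^m \End_{\gp}(A) = 0$ since $\End_{\gp}(A)$ is a finitely generated $\bZ$-module and hence $\ell$-adically separated. So the proof of \ref{inj2} reduces to (a) the stabilization of the decreasing chain $\ker(\rho_{\ell^m})$ using finiteness of $\Aut_{\gp}(A)$, and (b) the observation that an endomorphism killing all $\ell$-power torsion is infinitely $\ell$-divisible in the finitely generated $\bZ$-module $\End_{\gp}(A)$, hence zero. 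I expect (b) to need a sentence justifying that $w$ killing $A[\ell^m]$ lets us write $w = \ell^m w_m$ with $w_m \in \End_{\gp}(A)$, which follows since $\ell^m_A$ is an isogeny with kernel $A[\ell^m]$ and $w$ kills that kernel.
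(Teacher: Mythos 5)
Your proposal is correct, but it differs from the paper's argument in both parts, so let me compare. For \ref{inj1}, the paper does not construct an inverse: it observes that $\Aut_{\gp}(A/A[n]) \simeq \Aut_{\gp}(A)$ is finite (finiteness of $\Aut_{\gp}(A)$ is the standing assumption of that section), so it suffices to prove injectivity, and if $q_*(u)=\id$ then $u - \id$ factors through a homomorphism $A \to A[n]$, which vanishes because $A$ is smooth and connected and $A[n]$ is finite. Your route instead conjugates by the isomorphism $\phi\colon A/A[n] \xrightarrow{\ \sim\ } A$ with $\phi \circ q = n_A$; this is valid (one checks $q\circ\phi = n_{A/A[n]}$ and that $n_A$, being surjective onto the reduced scheme $A$, can be cancelled on the right), it is genuinely different, and it even buys something: it needs neither the finiteness of $\Aut_{\gp}(A)$ nor the connectedness argument, though the ``essentially formal'' verification should be written out. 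For \ref{inj2}, both you and the paper reduce to showing $\bigcap_m \Ker(\rho_{\ell^m}) = \{\id\}$ and then use finiteness of $\Aut_{\gp}(A)$ to stabilize the decreasing chain; the paper settles the intersection by quoting the injectivity of $\Aut_{\gp}(A) \to \Aut_{\bZ_\ell}(T_\ell(A))$ from \cite[\S 19, Thm.~3]{Mumford}, whereas your preferred phrasing writes $v - \id = \ell^m w_m$ (the natural factorization is $w = w_m \circ \ell^m_A$, which equals $\ell^m w_m$ since integer multiplications are central) and invokes that $\End_{\gp}(A)$ is a finitely generated \emph{torsion-free} $\bZ$-module, hence $\ell$-adically separated. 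That works, and note torsion-freeness is the essential point (for a finitely generated module with torsion prime to $\ell$ the intersection $\bigcap_m \ell^m M$ need not vanish), but the finite generation of $\End_{\gp}(A)$ is itself a theorem of the same \S 19 of Mumford, proved via Tate modules, so the two approaches are of comparable depth; your first alternative (faithfulness of the action on $V_\ell(A)$ plus infinite divisibility) is essentially the paper's citation in a slightly different guise.
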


\begin{proof}
\ref{inj1} Since $\Aut_{\gp}(A/A[n]) \simeq \Aut_{\gp}(A)$ is finite,
it suffices to show that $q_*$ is injective. 
Let $u \in \Aut_{\gp}(A)$ such that $q_*(u) = \id$. Then
$u(a) - a \in A[n]$ for any $a \in A$, that is, $u - \id$ factors
through a homomorphism $A \to A[n]$. As in the very end of 
the proof of Theorem \ref{thm:main}\ref{main1}
the smoothness and connectedness of $A$ yield $u = \id$.

\ref{inj2} 
Let $T_\ell(A) = \lim_{\leftarrow} A[\ell^m]$; then $T_\ell(A)$ is a
$\bZ_\ell$-module and the natural map
$\Aut_{\gp}(A) \to \Aut_{\bZ_\ell}(T_\ell(A))$
is injective (see \cite[\S 19, Thm.~3]{Mumford}). Thus,
$\bigcap_{m \geq 1} \Ker(\rho_{\ell^m}) = \{ \id \}$.
Since the $\Ker(\rho_{\ell^m})$ form a decreasing sequence, we 
get $\Ker(\rho_{\ell^m}) = \{ \id \}$ for $m \gg 0$.
\end{proof}

Next, consider a smooth projective variety $Y$ equipped with an 
action of the finite group $G = A[n]$, for some integer 
$n$ prime to $p$. Then $G$ acts freely on 
$A \times Y$ via $g \cdot (a,y) = (a - g, g \cdot y)$. The quotient 
$X = (A \times Y)/G$ exists and is a smooth projective variety 
(see \cite[\S 7, Thm.]{Mumford}). The $A$-action on $A \times Y$ 
via translation on itself yields an action on $X$. The projection
$\pr_A \colon A \times Y \to A$
yields a morphism
\[ f \colon X \longrightarrow A/G \]
which is $A$-equivariant, where $A$ acts on $A/G$ via the quotient
map $q$. Moreover, $f$ is smooth and its schematic fiber at 
$\bar{0}$ is $G$-equivariantly isomorphic to $Y$.

\begin{lemma}\label{lem:alb}
Assume that $Y$ is rational.

\begin{enumerate}

\item\label{alb1} The map $f$ is the Albanese morphism of $X$.

\item\label{alb2} The neutral component $\Aut^0(Y)$ is a linear 
algebraic group.

\end{enumerate}

\end{lemma}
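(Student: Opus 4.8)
The plan is to deduce both assertions from the single fact that a smooth projective rational variety has trivial Albanese. Concretely: the Albanese variety is a birational invariant of smooth projective varieties (a rational map from a smooth variety to an abelian variety is a morphism, see \cite{Mumford}), and the Albanese of $\bP^n$ is trivial; since $Y$ is rational and smooth projective, $\Alb(Y) = 0$. Consequently $\Alb(A \times Y) = \Alb(A) \times \Alb(Y) = A$, the Albanese morphism being the first projection $\pr_A$ (here $\id_A$ is the Albanese morphism of the abelian variety $A$, and $\Alb$ of a product is the product of the $\Alb$'s).

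For \ref{alb1} I would check directly that $f \colon X \to A/G$ has the universal property of the Albanese morphism of $X$. Recall that $\pi \colon A \times Y \to X = (A \times Y)/G$ is a $G$-torsor, hence finite faithfully flat and in particular an epimorphism of schemes, and that $f$ is built so that $f \circ \pi = q \circ \pr_A$. Given any morphism $h \colon X \to B$ to an abelian variety, the composite $h \circ \pi \colon A \times Y \to B$ factors uniquely through $\pr_A$, say $h \circ \pi = \psi \circ \pr_A$ with $\psi \colon A \to B$, by the universal property of $\Alb(A \times Y) = A$. Now for $g \in G$ one has $\pr_A \circ (g\cdot) = t_{-g} \circ \pr_A$ on $A \times Y$ (where $g \cdot (a,y) = (a-g, g\cdot y)$ and $t_{-g}$ is translation by $-g$ on $A$), whereas $h \circ \pi \circ (g\cdot) = h \circ \pi$; since $\pr_A$ is surjective this forces $\psi \circ t_{-g} = \psi$ for every $g \in G = A[n]$. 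Thus $\psi$ is invariant under translation by $A[n]$ and therefore factors as $\psi = \bar\psi \circ q$ for a unique morphism $\bar\psi \colon A/G \to B$. Then $h \circ \pi = \bar\psi \circ q \circ \pr_A = \bar\psi \circ f \circ \pi$, so $h = \bar\psi \circ f$ because $\pi$ is an epimorphism; and $\bar\psi$ is the unique such morphism, by surjectivity of $\pr_A$ and of $q$. Hence $f$ is the Albanese morphism of $X$.

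For \ref{alb2} I would invoke the theorem of Nishi and Matsumura (see \cite{Brion}): for a smooth projective variety $Y$ the Albanese morphism $Y \to \Alb(Y)$ is equivariant for the natural action of $\Aut^0(Y)$, which acts on $\Alb(Y)$ through a homomorphism $\Aut^0(Y) \to \Aut^0(\Alb(Y)) = \Alb(Y)$ whose kernel is an affine (i.e.\ linear) algebraic group. As $\Alb(Y) = 0$, this kernel is all of $\Aut^0(Y)$, so $\Aut^0(Y)$ is linear. I do not expect a serious obstacle: apart from the two external inputs (the Nishi--Matsumura theorem and the birational invariance of $\Alb$, giving $\Alb(Y)=0$), the argument is just a matter of cancelling epimorphisms on the right and of base-point bookkeeping in the universal property of the Albanese, which is harmless since $A$, $Y$ and $X$ all carry rational points.
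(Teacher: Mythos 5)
Your proposal is correct and follows essentially the same route as the paper: for \ref{alb1} you descend a morphism to an abelian variety along $A \times Y \to X$, using that rationality of $Y$ forces factorization through $\pr_A$ (the paper phrases this as the $G$-invariant morphism $A\times Y\to B$ factoring through $A$), and then through $q$ by $G$-invariance; for \ref{alb2} you combine the Nishi--Matsumura theorem with the vanishing of the abelian quotient. The only cosmetic difference is in \ref{alb2}: the paper uses the form of Nishi--Matsumura giving an equivariant morphism $Y \to \Aut^0(Y)/H$ with $H$ affine and kills the abelian homogeneous space because it is dominated by a rational variety, whereas you use the affine-kernel-of-the-action-on-$\Alb(Y)$ formulation together with $\Alb(Y)=0$ --- the same argument in substance.
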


\begin{proof}
\ref{alb1} Let $B$ be an abelian variety, and $u \colon X \to B$ a morphism.
Composing $u$ with the quotient morphism $A \times Y \to X$
yields a $G$-invariant morphism $v \colon A \times Y \to B$.
As $Y$ is rational, $v$ factors through a morphism $A \to B$, 
which must be $G$-invariant. So $u$ factors through a morphism
$A/G \to B$.

\ref{alb2} By a theorem of Nishi and Matsumura (see \cite{Brion} for a modern
proof), there exist a closed affine subgroup scheme $H \subset \Aut^0(Y)$ 
such that the homogeneous space $\Aut^0(Y)/H$ is an abelian variety, 
and an $\Aut^0(Y)$-equivariant morphism $u \colon Y \to \Aut^0(Y)/H$. 
As $Y$ is rational and $u$ is surjective, this forces $H = \Aut^0(Y)$.
\end{proof}

As a consequence of Lemma~\ref{lem:alb}, if $Y$ is rational then
$f$ induces a homomorphism
\[ f_* \colon \Aut(X) \longrightarrow \Aut(A/G), \]
and hence an exact sequence
\[ 1 \longrightarrow \Aut_{A/G}(X) \longrightarrow \Aut(X)
\stackrel{f_*}{\longrightarrow} A/G \rtimes \Aut_{\gp}(A/G), \]
where $\Aut_{A/G}(X)$ denotes the group of relative automorphisms.
The $A$-action on $X$ yields a homomorphism 
$G \to \Aut_{A/G}(X)$.
Moreover, the image of $f_*$ contains the group $A/G$ of
translations, and hence equals $A/G \rtimes \Gamma$,
where $\Gamma$ denotes the subgroup of $\Aut_{\gp}(A/G)$
consisting of automorphisms which lift to $X$.

\begin{lemma}\label{lem:rel}
Let $G = A[\ell^m]$, where $\ell,m$ satisfy the assumptions of 
Lemma $\ref{lem:inj}\ref{inj2}$.

Let $Y$ be a smooth projective rational $G$-variety such that $\Aut(Y) = G$.

\begin{enumerate}

\item\label{rel1} The map $G \to \Aut_{A/G}(X)$ is an isomorphism.

\item\label{rel2} The group $\Gamma$ is trivial.

\end{enumerate}

\end{lemma}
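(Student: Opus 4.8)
The goal is to show that for $X = (A \times Y)/G$ with $G = A[\ell^m]$ and $Y$ a smooth projective rational variety with $\Aut(Y) = G$, the relative automorphism group $\Aut_{A/G}(X)$ equals $G$ and the lifting group $\Gamma \subset \Aut_{\gp}(A/G)$ is trivial.

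\emph{Step 1: identifying relative automorphisms.} I would first observe that $f \colon X \to A/G$ pulls back along $q \colon A \to A/G$ to the projection $\pr_A \colon A \times Y \to A$, since $q$ is a $G$-torsor and $X' = X \times_{A/G} A \cong A \times Y$ (exactly as in Section~\ref{sec:(i)}). A relative automorphism of $X$ over $A/G$ therefore pulls back to a relative automorphism of $A \times Y$ over $A$ commuting with the $G$-action. But a relative automorphism of $A \times Y \to A$ is the same as a morphism $A \to \Aut(Y)$; since $Y$ is projective and $\Aut(Y) = G$ is finite (hence $\Aut_Y$ is a finite, in particular proper and affine, group scheme over $k$) and $A$ is connected, such a morphism is constant — it is a single element $g \in G = \Aut(Y)$, acting as $(a,y) \mapsto (a, g \cdot y)$. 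Imposing commutation with the twisted diagonal $G$-action $g' \cdot (a,y) = (a - g', g' \cdot y)$ then forces this to come from the $A$-action on $X$, i.e.\ the composite map $G \to \Aut_{A/G}(X)$ is surjective; injectivity is clear because $G$ acts faithfully on $A$ by translations. This proves \ref{rel1}.

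\emph{Step 2: rigidity of liftable automorphisms.} For \ref{rel2}, take $w \in \Gamma$, i.e.\ $w \in \Aut_{\gp}(A/G)$ that lifts to some $u \in \Aut(X)$ with $f_*(u) = w$ (after composing with a translation we may assume the lift fixes the fiber over $\bar 0$ setwise, or rather lands in $\Aut_{\gp}$). By Lemma~\ref{lem:inj}\ref{inj1}, $w = q_*(v)$ for a unique $v \in \Aut_{\gp}(A)$, and $v$ stabilizes $G = A[\ell^m]$ since it is an algebraic-group automorphism. Then $v \times \id$ on $A \times Y$ descends to $X$ precisely when it normalizes the twisted diagonal $G$-action: $(v \times \id)(g \cdot (a,y)) = (v(a) - v(g), g \cdot y)$ versus $g' \cdot (v \times \id)(a,y) = (v(a) - g', g' \cdot y)$ with $g' = \rho_{\ell^m}(v)(g)$ acting on $Y$; consistency requires $\rho_{\ell^m}(v) \in \Aut_{\gp}(G)$ to act on $Y$ by an element of the $G$-action, i.e.\ (since $\Aut(Y) = G$ and $G$ acts on itself, after composing with an inner adjustment) $\rho_{\ell^m}(v) = \id$. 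But by Lemma~\ref{lem:inj}\ref{inj2}, for $m \gg 0$ the map $\rho_{\ell^m}$ is injective on $\Aut_{\gp}(A)$, so $v = \id$, hence $w = \id$.

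\emph{Expected main obstacle.} The subtle point is the bookkeeping in Step~2: a lift $u$ of $w$ need not be literally $v \times \id$ — it could differ by a relative automorphism, i.e.\ by an element of $G = \Aut_{A/G}(X)$ from \ref{rel1}. So the precise statement is that $v \times \id$ composed with translations and elements of $G$ descends, and one must track how $\rho_{\ell^m}(v)$ interacts with the $G$-action on $Y$ versus the $G$-action on itself. The clean way is: $w \in \Gamma$ iff there exists $g_0 \in G$ such that the automorphism $(a,y) \mapsto (v(a), \rho_{\ell^m}(v) \cdot y)$ of $A \times Y$ — where $\rho_{\ell^m}(v)$ acts on $Y$ through $\Aut(Y) = G$ and its natural action — normalizes the $G$-action, and unwinding this shows $\rho_{\ell^m}(v)$ must be an inner-type twist that, because $G$ is abelian and acts on $Y$ faithfully realizing all of $\Aut(Y)$, collapses to the identity. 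Injectivity of $\rho_{\ell^m}$ then finishes it. I would write this out carefully since it is the heart of the argument; everything else is formal descent along the torsor $r \colon A \times Y \to X$ and the universal property of categorical quotients already used in Section~\ref{sec:(i)}.
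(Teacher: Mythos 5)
Your plan follows essentially the same route as the paper: pass to the $G$-torsor $A \times Y \to X$, use that any family of automorphisms of $Y$ parametrized by the connected $A$ is constant (the paper gets this from Lemma~\ref{lem:alb}\ref{alb2}, you from finiteness of the group scheme $\Aut_Y$ --- both work), restrict a lift to the fibre over $\bar 0$ to get an element of $G$, and then combine commutativity of $G$ with Lemma~\ref{lem:inj} to kill $v$.

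Two local points need repair before this is a proof. First, in \ref{rel1} your stated reason for injectivity (``$G$ acts faithfully on $A$ by translations'') is not the right one: translations by $G$ become trivial on $A/G$, and if $G$ acted trivially on $Y$ then $X \simeq (A/G)\times Y$ and $G\subset A$ would act trivially on $X$; injectivity genuinely uses that $G\to\Aut(Y)$ is injective, i.e.\ the faithfulness of the $G$-action on the fibre $Y$. Second, in \ref{rel2} the ``clean'' criterion you display is not meaningful as written: $\rho_{\ell^m}(v)$ is an automorphism of the group $G$, not an element of $G$, so it cannot ``act on $Y$ through $\Aut(Y)=G$''. The correct bookkeeping is: given $u\in\Aut(X)$ with $f_*(u)=w=q_*(v)$, pull $u$ back along $q$ (the automorphism $u\times v$ of $X\times A$ preserves $X\times_{A/G}A\simeq A\times Y$), apply the constancy argument to see that this pullback is $v\times h$ for some $h\in\Aut(Y)=G$, and note that it sends $G$-orbits to $G$-orbits, which unwinds to $h g h^{-1}=v(g)$ for all $g\in G$; commutativity of $G$ then gives $v\vert_{A[\ell^m]}=\id$, and Lemma~\ref{lem:inj}\ref{inj2} gives $v=\id$, hence $w=\id$. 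This is exactly the computation the paper performs, working directly on $X$ via the morphism $(a,y)\mapsto(-v(a))\cdot u(\overline{(a,y)})$ and the identity $u(\overline{(a,y)})=v(a)\cdot g(y)$; once the two points above are fixed, your argument coincides with the published one.
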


\begin{proof}
\ref{rel1} Let $u \in \Aut_{A/G}(X)$. Then $u$ restricts to an automorphism
of $Y$ (the fiber of $f$ at $0$), and hence to a unique $g \in G$.
Replacing $u$ with $g^{-1} u$, we may assume that $u$ fixes $Y$
pointwise. For any $a \in A$ and $y \in Y$, we have
$f(u(\overline{(a,y)})) = f(\overline{(a,y)}) = \bar{a}$, 
where $\overline{(a,y)}$ denotes the image of $(a,y)$ in $X$.
As $f$ is $A$-equivariant, it follows that 
$(-a) \cdot u(\overline{(a,y)}) \in Y$. This defines a morphism
\[ F \colon A \times Y \longrightarrow Y, \quad
(a,y) \longmapsto (-a) \cdot u(\overline{(a,y)})  \]
such that $F(0,y) = u(y) = y$ for all $y \in Y$. As $A$ is connected,
this defines in turn a morphism (of varieties) $A \to \Aut^0(Y)$,
which must be constant by Lemma~\ref{lem:alb}\ref{alb2}. So 
$u(\overline{(a,y)}) = a \cdot y = \overline{(a,y)}$
identically, i.e., $u = \id$.

\ref{rel2} Let $\gamma \in \Gamma$; then there exists $u \in \Aut(X)$
such that $f_*(u) = \gamma$. Since $\gamma(\bar{0}) = \bar{0}$,
we see that $u$ stabilizes $Y$; thus, $u \vert_Y = g$ for a unique
$g \in G$. Also, there exists $v \in \Aut_{\gp}(A)$ such that
$q_*(v) = \gamma$ (Lemma~\ref{lem:inj}\ref{inj1}). 
Thus, we have 
$f(u(\overline{a,y})) = \gamma f(\overline{(a,y)}) = \overline{v(a)}$, 
i.e., $(- v(a)) \cdot u(\overline{(a,y)}) \in Y$ for all $a \in A$ 
and $y \in Y$. Arguing as in the proof of \ref{rel1}, it follows that 
\[ u(\overline{(a,y)}) = v(a) \cdot g(y) \]
identically. In particular, $g(a \cdot y) = v(a) \cdot g(y)$ for all $a \in G$
and $y \in Y$. Since $G$ is commutative, we obtain $v(a) = a$
for all $a \in G$. Thus, $v = \id$ by Lemma~\ref{lem:inj}\ref{inj2}.
So $\gamma = \id$ as well.
\end{proof}

\begin{proposition}\label{prop:aut}
Under the assumptions of Lemma~$\ref{lem:rel}$, the $A$-action 
on $X$ yields an isomorphism $A \to \Aut(X)$. If in addition 
$G = \Aut_Y$, then $A \to \Aut_X$ is an isomorphism as well.
\end{proposition}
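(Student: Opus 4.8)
The plan is to assemble the three lemmas from this section into the exact sequence describing $\Aut(X)$ and check that all the ``extra'' pieces vanish. Recall we have the $A$-equivariant Albanese morphism $f \colon X \to A/G$ (Lemma~\ref{lem:alb}\ref{alb1}), which yields the exact sequence
\[ 1 \longrightarrow \Aut_{A/G}(X) \longrightarrow \Aut(X)
\stackrel{f_*}{\longrightarrow} A/G \rtimes \Gamma \longrightarrow 1, \]
the surjectivity onto $A/G \rtimes \Gamma$ being the definition of $\Gamma$ together with the fact that translations of $A/G$ lift to $X$. By Lemma~\ref{lem:rel}\ref{rel2}, $\Gamma$ is trivial, so the right-hand term is just $A/G$; by Lemma~\ref{lem:rel}\ref{rel1}, the $A$-action induces an isomorphism $G \xrightarrow{\sim} \Aut_{A/G}(X)$. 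Hence the sequence becomes
\[ 1 \longrightarrow G \longrightarrow \Aut(X) \stackrel{f_*}{\longrightarrow} A/G \longrightarrow 1. \]

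Next I would identify the middle term with $A$ itself. The $A$-action on $X$ defines a homomorphism $\alpha \colon A \to \Aut(X)$; since the action is faithful (the stabilizer of a general point is trivial, as $G$ acts freely on $A \times Y$ and $A/G \to A/G$ separates the $A/G$-coordinate), $\alpha$ is injective. Composing with $f_*$ gives exactly the quotient map $q \colon A \to A/G$, because $f$ is $A$-equivariant for the $A$-action on $A/G$ through $q$. Thus $\alpha$ fits into a morphism of the two short exact sequences
\[ \xymatrix{
1 \ar[r] & G \ar[r] \ar@{=}[d] & A \ar[r]^-{q} \ar[d]^{\alpha} & A/G \ar[r] \ar@{=}[d] & 1 \\
1 \ar[r] & G \ar[r] & \Aut(X) \ar[r]^-{f_*} & A/G \ar[r] & 1,
} \]
where the left square commutes because $\alpha$ restricted to $G = A[\ell^m]$ is the map $G \to \Aut_{A/G}(X)$ of Lemma~\ref{lem:rel}\ref{rel1} (here I should double-check the sign conventions: the $A$-action on $X = (A\times Y)/G$ is by translation on the first factor, and $G$ embeds via $g \mapsto$ translation by $g$, which is consistent). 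By the five lemma $\alpha$ is an isomorphism, proving the first assertion.

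For the second assertion, suppose in addition $G = \Aut_Y$ as group schemes (not merely $\Aut(Y) = G$ on $k$-points). The point is that $X = (A \times Y)/G$ is smooth projective (by \cite[\S 7, Thm.]{Mumford}, since $G$ acts freely), so $\Aut_X$ is a smooth group scheme, and by the first part its group of $k$-points is $A$; it remains to see $\Aut_X$ has no extra infinitesimal part, i.e.\ is reduced of the right dimension, equivalently $\Lie \Aut_X = \Lie A$. Since $f$ is the Albanese morphism it is canonical, so $\Aut_X$ acts on $A/G$ and we get $\Aut_X \to \Aut_{A/G} = A/G \rtimes \Aut_{\gp}(A/G)$ at the scheme level; the kernel is $\Aut_{A/G,X}$, the relative automorphism group scheme, and as before it acts on the fiber $Y$ giving $\Aut_{A/G,X} \to \Aut_Y = G$, which on Lie algebras is injective by the same connectedness-of-$A$ argument as in Lemma~\ref{lem:rel}\ref{rel1} applied to tangent vectors. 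Hence $\Lie \Aut_{A/G,X} \hookrightarrow \Lie G = 0$, so $\Lie \Aut_X \hookrightarrow \Lie(A/G) = \Lie A$; combined with the $A$-action this gives $\Lie \Aut_X = \Lie A$, and since both group schemes are smooth with the same Lie algebra and the same neutral component's $k$-points, $A \to \Aut_X$ is an isomorphism. The main obstacle I anticipate is making the scheme-theoretic (as opposed to $k$-points) arguments in this last step precise — in particular verifying that the relative automorphism functor is represented by a group scheme to which the Nishi--Matsumura-based rigidity argument of Lemma~\ref{lem:rel} applies verbatim on tangent vectors, rather than only on $k$-points; once that is granted the five-lemma packaging is routine.
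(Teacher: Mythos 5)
Your treatment of the first assertion is correct and coincides with the paper's: the same diagram of exact sequences built from Lemma~\ref{lem:rel} (left vertical map an isomorphism, image of $f_*$ equal to $A/G$), concluded by the five lemma. The problem lies entirely in the second assertion, which is where the real content of the proposition sits, and there your argument has a genuine gap. First, the claim that $X$ smooth projective forces $\Aut_X$ to be a smooth group scheme is false in positive characteristic (Cartier's theorem only applies in characteristic $0$); the possible non-reducedness of $\Aut_X$ is exactly what the hypothesis $G=\Aut_Y$, i.e.\ $\Lie(\Aut_Y)=0$, is there to rule out, so this cannot be assumed. You then (correctly) reduce to showing $\Lie(\Aut_X)=\Lie(A)$, but your route to this rests on two steps that you yourself flag as unverified and that do not follow ``verbatim'' from the earlier lemmas: (1) that the Albanese morphism is functorial for automorphisms over $\Spec \k[\epsilon]$, i.e.\ that the group \emph{scheme} $\Aut_X$ (not merely its $k$-points) acts on $A/G$ compatibly with $f$ — this requires the relative Albanese over an Artinian base, not just canonicity of $f$ over $k$; and (2) that an infinitesimal relative automorphism restricting to the identity on $Y$ is trivial, an infinitesimal analogue of Lemma~\ref{lem:rel}\ref{rel1} whose rigidity argument ($A$ proper and connected, $\Aut^0(Y)$ affine) must be redone over $\k[\epsilon]$. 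Since these are precisely the points where infinitesimal automorphisms could hide, leaving them as ``the main obstacle'' means the proof of the second assertion is not there.

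For comparison, the paper bypasses all representability and functoriality-over-non-reduced-bases issues by computing directly with vector fields: $\Lie(\Aut_X)=H^0(X,T_X)$, the smoothness of $f$ gives the exact sequence $0\to T_f\to T_X\to f^*(T_{A/G})\to 0$, hence $0\to H^0(X,T_f)\to H^0(X,T_X)\to \Lie(A/G)$, where the composite $\Lie(A)\to H^0(X,T_X)\to\Lie(A/G)$ is $\Lie(q)$, an isomorphism because $G=A[\ell^m]$ is \'etale ($\ell\neq p$). It then remains only to kill $H^0(X,T_f)$, which is done by descending along the $G$-torsor $A\times Y\to X$:
\[ H^0(X,T_f)\simeq H^0(A\times Y,\pr_Y^*(T_Y))^G\simeq H^0(Y,T_Y)^G=\Lie(\Aut_Y)^G=0 \]
using $G=\Aut_Y$. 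If you want to salvage your own route, you would need to supply the two missing ingredients above; the paper's tangent-sheaf computation is both shorter and avoids them entirely.
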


\begin{proof}
We have a commutative diagram of exact sequences
\[ \xymatrix{
0 \ar[r] & G \ar[r] \ar[d] & 
A \ar[r] \ar[d] & A/G \ar[r] \ar[d] & 0
\\
1 \ar[r] & \Aut_{A/G}(X) \ar[r] &  \Aut(X) \ar[r]^-{f_*} & \Aut(A/G).  \\
} \]
By Lemma~\ref{lem:rel}, the left vertical map is an isomorphism
and the image of $f_*$ is the group $A/G$ of translations. 
This yields the first assertion.

To show the second assertion, it suffices to show that the induced
homomorphism of Lie algebras $\Lie(A) \to \Lie(\Aut_X)$
is an isomorphism when $G = \Aut_Y$. Recall that $\Lie(\Aut_X)$ 
is the space of global sections of the tangent bundle $T_X$ 
(see e.g.~\cite[Lem.~3.4]{MO}). Moreover, 
as $f$ is smooth, we have an exact sequence
\[ 0 \longrightarrow T_f \longrightarrow T_X 
\stackrel{df}{\longrightarrow} f^*(T_{A/G}) \longrightarrow 0, \]
where $T_f$ denotes the relative tangent bundle.
Since $T_{A/G}$ is the trivial bundle with fiber $\Lie(A/G)$,
this yields an exact sequence
\[ 0 \longrightarrow H^0(X,T_f) \longrightarrow H^0(X,T_X)
\longrightarrow \Lie(A/G) \]
such that the composition 
$\Lie(A) \to H^0(X,T_X) \to \Lie(A/G)$ is $\Lie(q)$.
So it suffices in turn to show that $H^0(X,T_f) = 0$.

We have a cartesian diagram 
\[ \xymatrix{
A \times Y \ar[r]^-{\pr_A} \ar[d] & A \ar[d]_{}\\
X \ar[r]^-{f} & A/G,\\
} \]
where the vertical arrows are $G$-torsors.
This yields an isomorphism
\[ H^0(X,T_f) \simeq H^0(A \times Y, T_{\pr_A})^G \]
and hence
\[ H^0(X,T_f) \simeq H^0(A \times Y, \pr_Y^*(T_Y))^G
\simeq (\cO_A(A) \otimes H^0(Y,T_Y))^G
\simeq H^0(Y,T_Y)^G. \]
As $G = \Aut_Y$, we have $H^0(Y,T_Y) = \Lie(G) = 0$; this
completes the proof. 
\end{proof}

\section{Proof of Theorem \ref{thm:main}\ref{main2}: the construction of $Y$}
\label{Sec:Y}
In this section, we fix integers $n,r\ge 2$, where $p$ 
does not divide $n$, and construct a smooth projective rational variety $Y$ 
of dimension $r$ such that $\Aut_Y=(\mathbb{Z}/n)^r$.

We define 
\[G=\{(\mu_1,\ldots,\mu_r)\in \k^r\mid \mu_i^n=1
\text{ for each } i\in \{1,\ldots,r\}\}\simeq (\mathbb{Z}/n)^r\]
 and let $G$ act on $(\p^1)^r$ by 
 \[\begin{array}{ccc}
 G \times (\p^1)^r&\to & (\p^1)^r\\
( (\mu_1,\ldots,\mu_r),([ u_1:v_1],\ldots,[ u_r:v_r]))&
\mapsto & ([ u_1:\mu_1v_1],\ldots,[ u_r:\mu_rv_r])\end{array}\]

For each $i\in \{1,\ldots, r\}$, we denote by $\ell_i\subset (\p^1)^r$ 
the closed curve isomorphic to $\p^1$ given by the image of
 \[\begin{array}{ccc}
 \p^1&\to & (\p^1)^r\\
([u:v])&\mapsto & ([0:1],\ldots,[0:1],[u:v],[0:1],\ldots,[0:1])\end{array}\]
where the $[u:v]$ is at the place $i$. The curves 
$\ell_1,\ldots,\ell_r\subset (\p^1)^r$ generate the cone of curves of $(\p^1)^r$.

For each $i\in \{1,\ldots, r\}$, the curve $\ell_i$ is stable by $G$ 
and the action of $G$ on $\ell_i$ corresponds to a cyclic action of order 
$n$ on $\p^1$, given by $[u:v]\mapsto [\mu u:v]$, where $\mu\in \k$, $\mu^n=1$. 
All orbits are of size $n$, except the two fixed points $[0:1]$ and $[1:0]$.

We choose $s=(s_1,\ldots,s_r)$ to be a sequence of positive integers, all 
distinct, such that $s_i\cdot n\ge 3$ for each $i$ if $r=2$, and consider a 
finite subset 
\[ \Delta\subset \ell_1\cup \cdots\cup \ell_r\subset (\p^1)^r, \] 
stable
by $G$, given by a union of orbits of size $n$. 
For each $i\in \{1,\ldots,r\}$, we define $\Delta_i\subset \ell_i$ 
to be a union of exactly $s_i\ge 1$ orbits of size $n$, and choose then 
$\Delta=\bigcup_{i=1}^r \Delta_i$. We moreover choose the points such that 
the group $H=\{h\in \Aut(\p^1)\mid h(\Delta_i)=\Delta_i, h([0:1])=[0:1]\}$ 
only consists of $\{[u:v]\mapsto [\mu u:v]\mid u^n=1\}$.  
As the unique point of intersection of any two distinct
$\ell_i$ is fixed by $G$,  each point of $\Delta$ lies on exactly one 
of the curves $\ell_i$. This gives 
\[\Delta=\uplus_{i=1}^r \Delta_i\]

Let $\pi\colon Y\to (\p^1)^r$ be the blow-up of $\Delta$. 
As $\Delta$ is $G$-invariant, the action of $G$ lifts to 
an action on $Y$. We want to prove that the resulting homomorphism
$G \to \Aut_Y$ is an isomorphism.

\subsection{Intersection on $(\p^1)^r$}
For $i=1,\ldots,r$, we denote by $H_i\subset (\p^1)^r$ the hypersurface 
given by
\[H_i=\{([u_1:v_1],\ldots,[u_r:v_r])\in (\p^1)^r\mid u_i=0\}.\]
Then $H_1,\ldots,H_r$ generate the cone of effective divisors on $(\p^1)^r$, 
and we have
\[H_i\cdot \ell_i=1, H_i\cdot \ell_j=0\]
for all $i,j \in \{1,\ldots,r\}$ with $i\not= j$. Moreover, 
the canonical divisor class of $(\p^1)^r$ satisfies
$K_{(\p^1)^r}=-2H_1-2H_2-\cdots -2H_r$, so $K_{(\p^1)^r}\cdot \ell_i=-2$ 
for each $i \in \{1,\ldots,r\}$.

We also observe that $\ell_i\subset H_j$ for all $i,j \in \{1,\ldots,r\}$ 
with $i\not= j$ and that $\ell_i\not\subset H_i$.

\subsection{Intersection on $Y$}

For $i=1,\ldots,r$, denote by $\tilde{\ell}_i,\tilde{H}_i\subset Y$ 
the strict transforms of $\ell_i$ and $H_i$. 

For each $p\in \Delta$, we denote by $E_p=\pi^{-1}(p)$ the exceptional 
divisor, isomorphic to $\p^{r-1}$, and choose a line $e_p\subset E_p$.

A basis of the Picard group of $Y$ is given by  the union of 
$\tilde{H}_1,\ldots,\tilde{H}_r$ and of all exceptional divisors $E_p$, 
with $p\in \Delta$. A basis of the vector space of curves 
(up to numerical equivalence) is given by  
$\tilde{\ell}_1,\ldots,\tilde{\ell}_r$ and by all $e_p$ with $p\in \Delta$. 
We have
\begin{equation*}e_p\cdot E_p=-1, e_p\cdot E_q=0\end{equation*}
for all $p,q\in \Delta$, $p\not=q$.

\begin{lemma}\label{LemmaHiellj}
For all $i,j\in \{1,\ldots,r\}$ with $i\not=j$, the following hold:
\begin{enumerate}
\item\label{tildeHi}
$\tilde{H}_i=\pi^*(H_i)-\sum\limits_{p\in \Delta\cap H_i} E_p
=\pi^*(H_i)-\sum\limits_{s\not=i}\sum\limits_{p\in \Delta_s} E_p.$
\item\label{tildeelliDelta}
$\tilde{\ell}_i\cdot E_p=1$ if $p\in \Delta_i$ and 
$\tilde{\ell}_i\cdot E_p=0$ if $p\in \Delta\setminus \Delta_i$.
\item\label{tildeHielli}
$\tilde{H}_i\cdot \tilde{\ell}_i=1$.
\item\label{tildeHiellj}
$\tilde{H}_i\cdot \tilde{\ell}_j=-\lvert \Delta_j\rvert=-n s_j$.
\end{enumerate}
\end{lemma}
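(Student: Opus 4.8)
The plan is to compute all four intersection numbers on $Y$ by pulling back to $(\p^1)^r$ and carefully accounting for the exceptional divisors, using the bases of $\Pic(Y)$ and of the space of curves described just above, together with the projection formula $\pi^*(D)\cdot C = D\cdot \pi_*(C)$.

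First I would prove \ref{tildeHi}. Since $H_i$ is smooth and $\Delta \cap H_i$ consists of the points of $\Delta$ lying on some $\ell_s$ with $s\neq i$ (because $\ell_s\subset H_i$ exactly when $s\neq i$, as recorded in the intersection section on $(\p^1)^r$), the strict transform of $H_i$ under the blow-up of the (finitely many, reduced) points of $\Delta$ is $\pi^*(H_i)$ minus the sum of the exceptional divisors over the points of $\Delta$ that lie on $H_i$. Each such point lies on a unique $\ell_s$ with $s\neq i$ (the curves $\ell_s$ meet pairwise only in $G$-fixed points, which are not in $\Delta$), so $\Delta\cap H_i = \biguplus_{s\neq i}\Delta_s$, giving the stated formula.

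Next, \ref{tildeelliDelta}: the curve $\ell_i$ is not contained in $H_i$ but passes through exactly the points of $\Delta_i$ (and through no point of $\Delta_s$ for $s\neq i$, again because distinct $\ell$'s meet only at $G$-fixed points not in $\Delta$); moreover $\Delta_i$ is a union of orbits of size $n$ inside $\ell_i\cong\p^1$, so $\ell_i$ is smooth at each such point and meets it transversally in $(\p^1)^r$. Hence $\tilde\ell_i$ meets $E_p$ in one point for $p\in\Delta_i$ and is disjoint from $E_p$ for $p\in\Delta\setminus\Delta_i$. For \ref{tildeHielli}, write $\tilde H_i=\pi^*(H_i)-\sum_{p\in\Delta\cap H_i}E_p$ and intersect with $\tilde\ell_i$: by the projection formula $\pi^*(H_i)\cdot\tilde\ell_i = H_i\cdot\ell_i=1$, and $E_p\cdot\tilde\ell_i=0$ for all $p\in\Delta\cap H_i$ since those $p$ lie in some $\Delta_s$ with $s\neq i$, so by \ref{tildeelliDelta} that term vanishes; thus $\tilde H_i\cdot\tilde\ell_i=1$. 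Finally, for \ref{tildeHiellj} with $i\neq j$: again $\tilde H_i\cdot\tilde\ell_j = \pi^*(H_i)\cdot\tilde\ell_j - \sum_{p\in\Delta\cap H_i}E_p\cdot\tilde\ell_j$; here $\pi^*(H_i)\cdot\tilde\ell_j = H_i\cdot\ell_j=0$, while among the $p\in\Delta\cap H_i=\biguplus_{s\neq i}\Delta_s$ exactly those with $s=j$ contribute, each giving $E_p\cdot\tilde\ell_j=1$ by \ref{tildeelliDelta}; so $\tilde H_i\cdot\tilde\ell_j = -|\Delta_j| = -n s_j$, since $\Delta_j$ is a union of $s_j$ orbits of size $n$.

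The main obstacle is not any single hard computation but getting the bookkeeping of which exceptional divisors meet which strict transform exactly right, which hinges on the geometric facts that the $\ell_i$ intersect pairwise only in $G$-fixed points (hence not in $\Delta$), so each point of $\Delta$ lies on a unique $\ell_i$, and that the chosen $\Delta_i$ consist of reduced orbit points at which $\ell_i$ is smooth; once these are pinned down, everything reduces to the projection formula and the negativity $E_p\cdot e_p=-1$ already recorded. One should also note that $\tilde\ell_i$ here denotes the strict transform, which agrees with $\pi^*(\ell_i)$ minus contributions only at the blown-up points on $\ell_i$, but since we only ever intersect $\tilde\ell_i$ against the divisor classes $\pi^*(H_i)$ and $E_p$, the projection formula suffices and we never need an explicit class for $\tilde\ell_i$ in the given basis.
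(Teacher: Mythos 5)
Your proposal is correct and follows essentially the same route as the paper: establish \ref{tildeHi} from the smoothness of $H_i$ and the identification $\Delta\cap H_i=\biguplus_{s\neq i}\Delta_s$, establish \ref{tildeelliDelta} from the fact that the smooth curve $\ell_i$ passes through exactly the points of $\Delta_i$, and then deduce \ref{tildeHielli} and \ref{tildeHiellj} by expanding $\tilde H_i$ via \ref{tildeHi} and applying the projection formula together with \ref{tildeelliDelta}. The extra bookkeeping you spell out (each point of $\Delta$ lies on a unique $\ell_s$ because the $\ell$'s meet only in the $G$-fixed point, which is excluded since $\Delta$ consists of free orbits) is exactly the justification the paper relies on.
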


\begin{proof}
\ref{tildeHi} follows from the fact that $H_i$ is a smooth hypersurface 
of $(\p^1)^r$ and that $\Delta\cap H_i=\bigcup\limits_{s\not=i} \Delta_s$.

\ref{tildeelliDelta}: follows from the fact that $\ell_i$ is a smooth curve, 
passing through all points of $\Delta_i$ and not through any point 
of $\Delta\setminus \Delta_i$.

\ref{tildeHielli}: 
With \ref{tildeHi} and \ref{tildeelliDelta}, we get  
$\tilde{H}_i\cdot \tilde{\ell}_i= H_i\cdot \ell_i =1$.

\ref{tildeHiellj}:
With \ref{tildeHi} and \ref{tildeelliDelta}, we get  
$\tilde{H}_i\cdot \tilde{\ell}_j=H_i\cdot \ell_j-\lvert \Delta_j\rvert
=-\lvert \Delta_j\rvert=-ns_j$.
\end{proof}

\begin{lemma}\label{Lemm:gammapj}
For all $i\in \{1,\ldots, r\}$ and each $p\in \Delta\setminus\Delta_i$, 
we take the irreducible curve $\gamma_{p,i}\subset (\p^1)^r$ passing through 
$p$ and being numerically equivalent to $\ell_i$. 

\begin{enumerate}
\item\label{gammapj}
Let $j\in \{1,\ldots,r\}$ be such that $p\in \Delta_j$.
The $j$-th coordinate of $\gamma_{p,i}$ is the one of $p$, 
its $i$-th coordinate is free, and all others are $[0:1]$. 
\item\label{gammapjeq}
The strict transform $\tilde{\gamma}_{p,i}$ of $\gamma_{p,i}$ on $Y$ 
is numerically equivalent to 
$\tilde{\ell}_i+\sum\limits_{q\in \Delta_i} e_q-e_p$ and satisfies 
$\tilde{\gamma}_{p,i}\cdot E_p=1$ 
and $\tilde{\gamma}_{p,i}\cdot E_q=0$ for all $q\in \Delta\setminus \{p\}$.
\end{enumerate}
\end{lemma}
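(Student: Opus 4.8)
The plan is to prove Lemma~\ref{Lemm:gammapj} by first identifying the curve $\gamma_{p,i}$ explicitly on $(\p^1)^r$ and then transferring the computation to $Y$ via the blow-up formulas of Lemma~\ref{LemmaHiellj}.

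\textbf{Identifying $\gamma_{p,i}$.}
For part \ref{gammapj}, I would argue as follows. A curve numerically equivalent to $\ell_i$ has intersection number $0$ with $H_s$ for $s \neq i$ and $1$ with $H_i$. Since the $H_s$ are the coordinate hypersurfaces $\{u_s = 0\}$, having intersection $0$ with $H_s$ and being irreducible forces the curve either to be contained in $H_s$ or to be disjoint from it; in fact, using that $H_i \cdot \gamma_{p,i}=1$ and the product structure, the projection of $\gamma_{p,i}$ to the $s$-th factor $\p^1$ must be constant for every $s \neq i$ (its degree is $H_s \cdot \gamma_{p,i} = 0$ when the curve is not contained in $H_s$; and a curve contained in $H_s$, i.e. with constant $s$-coordinate equal to $[0:1]$, also has this property), while the projection to the $i$-th factor has degree $1$, hence is an isomorphism. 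Therefore $\gamma_{p,i}$ is the image of a map $[u:v] \mapsto (c_1, \dots, c_{i-1}, [u:v], c_{i+1}, \dots, c_r)$ for constants $c_s \in \p^1$. Since $\gamma_{p,i}$ passes through $p$, each $c_s$ must equal the $s$-th coordinate of $p$. As $p \in \Delta_j \subset \ell_j$, all coordinates of $p$ are $[0:1]$ except the $j$-th; this gives exactly the description in \ref{gammapj} (and shows $\gamma_{p,i}$ is unique). Note $i \neq j$ here since $p \in \Delta \setminus \Delta_i$ and $p \in \Delta_j$.

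\textbf{Computing on $Y$.}
For part \ref{gammapjeq}, I first determine which points of $\Delta$ lie on $\gamma_{p,i}$ and with what multiplicity. A point $q \in \Delta_t$ lies on $\gamma_{p,i}$ iff all coordinates of $q$ agree with those of $\gamma_{p,i}$ except possibly the $i$-th. If $t \neq i$, this forces the $t$-th coordinate of $q$ (which is not $[0:1]$) to equal the $t$-th coordinate of $\gamma_{p,i}$; the latter is $[0:1]$ unless $t = j$, so the only possibility is $t = j$ and $q = p$. If $t = i$: the $i$-th coordinate of $q$ is free along $\gamma_{p,i}$, and all other coordinates of $q \in \Delta_i \subset \ell_i$ are $[0:1]$, matching $\gamma_{p,i}$; so \emph{every} $q \in \Delta_i$ lies on $\gamma_{p,i}$. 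Since $\gamma_{p,i}$ is a smooth curve (isomorphic to $\p^1$ via the $i$-th projection) meeting each of these points transversally — it is not contained in any $E_q$ and passes simply through each center — we get $\pi^*(\gamma_{p,i}) = \tilde\gamma_{p,i} + \sum_{q \in \Delta_i} E_q + E_p$ as cycle classes, equivalently $\tilde\gamma_{p,i} \equiv \ell_i - \sum_{q \in \Delta_i} e_q - e_p$ after using $\pi^*\ell_i \equiv \tilde\ell_i + \sum_{q\in\Delta_i} e_q$ from \ref{tildeelliDelta}. Wait — I must be careful with the sign conventions: since $e_q \cdot E_q = -1$, writing the strict transform in the $\{\tilde\ell_s, e_q\}$ basis, the correct identity to establish is $\tilde\gamma_{p,i} \equiv \tilde\ell_i + \sum_{q \in \Delta_i} e_q - e_p$, which I would verify by computing intersection numbers against the dual basis $\{\tilde H_s, E_q\}$ using Lemma~\ref{LemmaHiellj}: one checks $\tilde\gamma_{p,i} \cdot E_p = 1$, $\tilde\gamma_{p,i}\cdot E_q = 0$ for $q \neq p$ (directly from the incidence analysis above, since a general point of $\gamma_{p,i}$ avoids all centers and the curve passes through $p$ once and through each $q \in \Delta_i$ once but those contributions are absorbed into the $\tilde\ell_i$ term), and $\tilde\gamma_{p,i} \cdot \tilde H_s$ matches the claimed class via \ref{tildeHielli}, \ref{tildeHiellj}, and \ref{tildeHi}.

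\textbf{Main obstacle.}
The routine part is the intersection bookkeeping; the one genuinely delicate point is the incidence analysis — correctly cataloguing which centers $q \in \Delta$ the curve $\gamma_{p,i}$ passes through, and checking that it does so transversally (so that strict transform equals total transform minus the reduced exceptional divisors, with no higher multiplicities). This hinges on the earlier observation in the construction that any two distinct $\ell_s$, $\ell_t$ meet only at a $G$-fixed point, so $\Delta_s \cap \Delta_t = \emptyset$, and that $\gamma_{p,i}$ is cut out by fixing all-but-one coordinate. Once the set $\{p\} \cup \Delta_i$ of centers on $\gamma_{p,i}$ is pinned down and transversality is noted, the numerical class and the two intersection identities $\tilde\gamma_{p,i}\cdot E_p = 1$, $\tilde\gamma_{p,i}\cdot E_q = 0$ ($q \neq p$) follow immediately.
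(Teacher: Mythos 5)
Part \ref{gammapj} of your argument is fine and agrees with the paper. The error is in the incidence analysis for part \ref{gammapjeq}, which you yourself flag as the one delicate point. You claim that every $q\in\Delta_i$ lies on $\gamma_{p,i}$ because ``all other coordinates of $q$ are $[0:1]$, matching $\gamma_{p,i}$'' --- but they do not match: by part \ref{gammapj} the $j$-th coordinate of $\gamma_{p,i}$ equals the $j$-th coordinate of $p$, and since $p\in\Delta_j$ lies in a $G$-orbit of size $n$ on $\ell_j$, that coordinate is neither $[0:1]$ nor $[1:0]$, whereas every $q\in\Delta_i\subset\ell_i$ has $j$-th coordinate $[0:1]$ (recall $j\neq i$). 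Hence $\gamma_{p,i}$ passes through \emph{no} point of $\Delta_i$; in fact $p$ is the only point of $\Delta$ on $\gamma_{p,i}$. Consequently your identity $\pi^*(\gamma_{p,i})=\tilde\gamma_{p,i}+\sum_{q\in\Delta_i}E_q+E_p$ is false (besides mixing $1$-cycles with divisors when $r\ge 3$), and your attempted repair --- that the curve ``passes through each $q\in\Delta_i$ once but those contributions are absorbed into the $\tilde\ell_i$ term'' --- is not a valid argument: $\tilde\gamma_{p,i}\cdot E_q$ is an intersection number of two actual subvarieties and is independent of any choice of basis; if your incidence claim were true it would equal $1$ for $q\in\Delta_i$, contradicting the very equalities you are trying to prove.

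The correct route (which is the paper's) is: from the corrected incidence, $\tilde\gamma_{p,i}\cdot E_p=1$ (transversal passage through the single center $p$) and $\tilde\gamma_{p,i}\cdot E_q=0$ for all $q\in\Delta\setminus\{p\}$; then one checks that the class $\tilde\ell_i+\sum_{q\in\Delta_i}e_q-e_p$ has the same intersections with every $\pi^*(D)$ and with every $E_q$, so it equals $\tilde\gamma_{p,i}$ numerically. The terms $+\sum_{q\in\Delta_i}e_q$ appear not because $\gamma_{p,i}$ meets those centers, but because $\tilde\ell_i\cdot E_q=1$ for $q\in\Delta_i$ (Lemma~\ref{LemmaHiellj}\ref{tildeelliDelta}) and these contributions must be cancelled to produce a class with $E_q$-intersection $0$. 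With the incidence set corrected to $\{p\}$, the rest of your bookkeeping goes through.
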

\begin{proof}\ref{gammapj}:
We write $p=(p_1,\ldots,p_r)\in (\p^1)^r$. Since 
$\gamma_{p,i}\subset (\p^1)^r$ is a curve equivalent to $\ell_i$ 
and passing through $p$, it has to be 
\[\gamma_{p,i}
=\{(p_1,\ldots,p_{i-1},t,p_{i+1},\ldots,p_r)\in (\p^1)^r\mid t\in \p^1\}
\simeq \p^1.\]
Moreover, for each $s\in \{1,\ldots,r\}\setminus \{j\}$, we have 
$p_s=[0:1]$, as $p\in \Delta_j\subset \ell_j$. This completes 
the proof of~\ref{gammapj}.

\ref{gammapjeq}: We want to prove that 
$\tilde{\gamma}_{p,i}\equiv \tilde{\ell}_i+\sum\limits_{q\in \Delta_i} e_q-e_p$. 
For each divisor $D$ on $(\p^1)^r$, we have 
\begin{align*}\tilde{\gamma}_{p,i}\cdot \pi^*(D)&
=\pi(\tilde{\gamma}_{p,i})\cdot D=\gamma_{p,i}\cdot D\\
(\tilde{\ell}_i-e_p)\cdot \pi^*(D)&=\pi(\tilde{\ell}_{i})\cdot D
=\ell_i\cdot D=\gamma_{p,i}\cdot D\end{align*}
We moreover have (with Lemma~\ref{LemmaHiellj}\ref{tildeelliDelta})
\begin{align*}\tilde{\gamma}_{p,i}\cdot E_p&=1
=E_p\cdot (\tilde{\ell}_i+\sum\limits_{q\in \Delta_i} e_q-e_p),\\
\tilde{\gamma}_{p,i}\cdot E_{p'}&=0
=E_{p'}\cdot (\tilde{\ell}_i+\sum\limits_{q\in \Delta_i} e_q-e_p), 
\text{ for all }p'\in \Delta\setminus \{p\}.\end{align*}
\end{proof}

\begin{lemma}\label{Lemm:ConeGen}
Let $\gamma\subset Y$ be an irreducible curve. Then, one of the following holds:
\begin{enumerate}
\item\label{ConeGen1}
We have $\gamma\equiv de_p$ for some $d\ge 1$ and some $p\in \Delta$
$($where $\equiv$ denotes numerical equivalence$)$;
\item\label{ConeGen2}
There are non-negative integers $a_1,\ldots,a_r$ and $\{\mu_p\}_{p\in \Delta}$ 
such that
\[\gamma\equiv  \sum_{i=1}^r a_i \tilde{\ell}_i+ \sum_{p\in \Delta} \mu_p e_p\]
and such that $a_1+\cdots +a_r\ge 1$.
 \item\label{ConeGen3}
There are $j\in \{1,\ldots,r\}$, $q\in \Delta_j$ and integers 
$a_1,\ldots,a_r\ge 0$  such that 
\[\gamma\equiv a_j e_q+\sum\limits_{i\not=j} a_i \tilde{\gamma}_{q,i}\]
and such that $\sum_{i\not=j} a_i\ge 1$.
\end{enumerate}
\end{lemma}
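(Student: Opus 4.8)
The plan is to distinguish cases according to the image $C:=\pi(\gamma)\subset(\p^1)^r$. If $C$ is a point, then $\gamma$ lies in a fibre of $\pi$; a fibre over a point of $\Delta$ is $E_p\cong\p^{r-1}$ and a fibre over a point outside $\Delta$ is reduced to a point, so $\gamma\subset E_p$ for some $p\in\Delta$ and $\gamma\equiv(\deg\gamma)\,e_p$ with $\deg\gamma\ge1$; this is case~\ref{ConeGen1}. So from now on I would assume $C$ is a curve, so that $\gamma=\tilde\gamma$ is the strict transform $\tilde C$ of $C$, and write $a_i:=C\cdot H_i$. Since $C$ is irreducible it cannot be contained in both of the disjoint, numerically equivalent divisors $\{u_i=0\}=H_i$ and $\{v_i=0\}$, so $a_i\ge0$; and since $H_1+\dots+H_r$ is ample (it is $\cO(1,\dots,1)$), the curve $C$ has positive degree against it, i.e.\ $a_1+\dots+a_r\ge1$.

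Next I would pin down the class of $\gamma$ in the given basis. Write $\gamma\equiv\sum_i b_i\tilde\ell_i+\sum_{p\in\Delta}\mu_p e_p$. Pairing with $\pi^*(H_i)$ and using $\pi^*(H_i)\cdot e_p=0$ and $\pi^*(H_i)\cdot\tilde\ell_{i'}=H_i\cdot\ell_{i'}$ (projection formula) gives $b_i=\tilde C\cdot\pi^*(H_i)=C\cdot H_i=a_i$. Pairing with $E_q$, using Lemma~\ref{LemmaHiellj}\ref{tildeelliDelta} together with the standard formula $\tilde C\cdot E_q=\operatorname{mult}_q(C)$ for the strict transform under the blow-up of the reduced point $q$, gives $\mu_q=a_{i(q)}-\operatorname{mult}_q(C)$, where $i(q)$ is the unique index with $q\in\Delta_{i(q)}$. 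If $\mu_p\ge0$ for all $p\in\Delta$, we are in case~\ref{ConeGen2}, so the remaining task is to treat the situation where $\mu_q<0$ for some $q$; set $j:=i(q)$ and $m:=\operatorname{mult}_q(C)$, so $m>a_j\ge0$, hence $m\ge1$.

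The key input — which I expect to be the main obstacle — is the bound: \emph{for every index $k$ with $a_k\ge1$ one has $m\le a_k$}. Indeed, $a_k\ge1$ means $\pr_k\vert_C\colon C\to\p^1$ is a finite morphism of degree $a_k$; its local degree at $q$, over $y:=\pr_k(q)$, equals $\dim_k\bigl(\cO_{C,q}/t\,\cO_{C,q}\bigr)$ for a local parameter $t$ of $\cO_{\p^1,y}$, and since $\cO_{C,q}$ is a reduced (hence Cohen--Macaulay) one-dimensional local ring and $t$ is a nonzerodivisor in its maximal ideal, this dimension is at least the multiplicity $e(\cO_{C,q})=m$; as the local degree is at most the global degree $a_k$, we get $m\le a_k$. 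From this: since $m>a_j$ we must have $a_j=0$; since $a_1+\dots+a_r\ge1$ there is some $k$, necessarily $\ne j$, with $a_k\ge1$, so $1\le m\le a_k\le\sum_{i\ne j}a_i$. Moreover $a_j=0$ means $\pr_j\vert_C$ is the constant map with value $q_j=\pr_j(q)$; as $q_j$ is not a fixed point of $G$ on $\ell_j$, every $q'\in\Delta\setminus\{q\}$ has $\pr_j(q')\ne q_j$ ($\pr_j(q')=[0:1]$ if $q'$ lies on some $\ell_{j'}$ with $j'\ne j$, and $\pr_j(q')=q'_j\ne q_j$ if $q'\in\Delta_j$, since $\pr_j$ restricts to an isomorphism $\ell_j\xrightarrow{\ \sim\ }\p^1$), whence $q'\notin C$; thus $\operatorname{mult}_p(C)=0$ and $\mu_p=a_{i(p)}$ for every $p\in\Delta\setminus\{q\}$.

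It then only remains to reassemble the class. Substituting $a_j=0$ and $\mu_p=a_{i(p)}$ ($p\ne q$) into $\gamma\equiv\sum_i a_i\tilde\ell_i+\sum_p\mu_p e_p$, splitting $\Delta\setminus\{q\}$ into $\Delta_j\setminus\{q\}$ (whose points contribute $0$ since $a_j=0$) and the $\Delta_i$ with $i\ne j$, and using the identity $\tilde\gamma_{q,i}\equiv\tilde\ell_i+\sum_{p\in\Delta_i}e_p-e_q$ from Lemma~\ref{Lemm:gammapj}\ref{gammapjeq}, one gets
\[\gamma\equiv\Bigl(\sum_{i\ne j}a_i-m\Bigr)e_q+\sum_{i\ne j}a_i\,\tilde\gamma_{q,i},\]
where the coefficient of $e_q$ is $\ge0$ because $m\le\sum_{i\ne j}a_i$, and $\sum_{i\ne j}a_i\ge1$; this is case~\ref{ConeGen3}. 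The routine parts are the intersection-number bookkeeping (all supplied by Lemma~\ref{LemmaHiellj}) and this final regrouping; the substantive content is the multiplicity-versus-degree bound of the previous paragraph, which is precisely where the hypothesis $\mu_q<0$ is converted into the structural conclusions $a_j=0$ and $C\cap\Delta=\{q\}$.
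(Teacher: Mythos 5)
Your proof is correct, and its overall skeleton matches the paper's: split off the case $\gamma\subset E_p$, write $\gamma$ in the basis $\tilde{\ell}_1,\dots,\tilde{\ell}_r,\{e_p\}$ with coefficients $a_i$ and $\mu_p=a_{i(p)}-E_p\cdot\gamma$, split on the signs of the $\mu_p$, and in the negative case show $a_j=0$ and $E_p\cdot\gamma=0$ for $p\neq q$ before regrouping via $\tilde{\gamma}_{q,i}\equiv\tilde{\ell}_i+\sum_{p\in\Delta_i}e_p-e_q$. Where you genuinely diverge is in how the negative case is exploited: you interpret $\epsilon_q=E_q\cdot\gamma$ as the multiplicity $\mathrm{mult}_q(\pi(\gamma))$ and prove the key inequality $\mathrm{mult}_q(\pi(\gamma))\le a_k$ (for any $k$ with $a_k\ge1$) by comparing the multiplicity of the local ring with the local degree of the finite projection $\mathrm{pr}_k\vert_{\pi(\gamma)}$, which simultaneously forces $a_j=0$ and gives nonnegativity of the $e_q$-coefficient; the vanishing $E_p\cdot\gamma=0$ for $p\ne q$ then comes from $\mathrm{pr}_j\vert_{\pi(\gamma)}$ being constant and $\Delta$ meeting the fibre $\mathrm{pr}_j^{-1}(q_j)$ only in $q$. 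The paper avoids multiplicities and local algebra entirely: it introduces, for each $p\in\Delta$, the hypersurface $H_p$ through $p$ with the same $i(p)$-th coordinate, whose strict transform satisfies $\tilde{H}_p\cdot\gamma=\mu_p$, so the case split becomes the sign of $\tilde{H}_q\cdot\gamma$; negativity forces $\gamma\subset\tilde{H}_q$, which yields $a_j=0$ and $E_p\cdot\gamma=0$ for $p\ne q$ at once, and the nonnegativity of the $e_q$-coefficient comes from $\tilde{H}_i\cdot\gamma\ge0$ for an index $i$ with $\gamma\not\subset\tilde{H}_i$. So both arguments hinge on intersecting $\gamma$ with fibres of the projections through $q$; yours converts that into a multiplicity-versus-degree bound (needing the standard facts $\tilde{C}\cdot E_q=\mathrm{mult}_q C$ and "multiplicity bounds local degree"), while the paper's stays purely intersection-theoretic, using only "an irreducible curve not contained in an effective divisor meets it nonnegatively", which is a bit more economical in prerequisites.
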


\begin{proof}
Suppose first that $\gamma$ is contained in some $E_p$, where $p\in \Delta$. 
In this case, $\gamma$ is a curve of degree $d\ge 1$ in the projective space 
$E_p\simeq \p^{r-1}$ (if $r=2$, then $\gamma=e_p=E_p$  and $d=1$), and thus 
$\gamma\equiv d e_p$. This gives Case~\ref{ConeGen1}.

We may now assume that $\gamma$ is not contained in $E_p$ for any $p\in \Delta$. 
Hence, $\gamma$ is the strict transform of the irreducible curve 
$\pi(\gamma)\subset (\p^1)^r$, numerically equivalent to 
$\sum_{i=1}^r a_i \ell_i$, with $a_1,\ldots,a_r\ge 0$ and 
$\sum_{i=1}^r a_i\ge 1$. For each $p\in \Delta$, we write 
$\epsilon_p= E_p\cdot \gamma \ge 0$. 

We first prove that
\begin{equation}\label{gammaequiv}\tag{$\spadesuit$}
\gamma\equiv \sum_{i=1}^r a_i \tilde{\ell}_i
+\sum_{i=1}^r\sum_{p\in \Delta_i} (a_i-\epsilon_p) e_p.
\end{equation}
Intersecting both sides of \eqref{gammaequiv} with the divisor $\pi^*(D)$, 
for any divisor $D$ on $(\p^1)^r$, gives 
$\pi(\gamma)\cdot D=\sum a_i\ell_i \cdot D$. Moreover, for each $p\in \Delta$, 
there is $j\in\{1,\ldots,r\}$ such that $p\in \Delta_j$. 
Intersecting $E_p$ with both sides of \eqref{gammaequiv}, we obtain 
$E_p\cdot \gamma=\epsilon_p
\stackrel{\text{Lemma~\ref{LemmaHiellj}\ref{tildeelliDelta}}}{=}
E_p\cdot (\sum\limits_{i=1}^r a_i \tilde{\ell}_i
+\sum\limits_{i=1}^r\sum\limits_{p\in \Delta_i} (a_i-\epsilon_p) e_p)$. 
This completes the proof of \eqref{gammaequiv}.

For each $p\in \Delta$, we denote by $i\in \{1,\ldots, r\}$ the integer 
such that $p\in \Delta_i$ and by $H_{p}\subset (\p^1)^r$ the hypersurface 
consisting of points $q\in (\p^1)^r$ having the same $i$-th coordinate as $p$. 
Hence $p_i\in H_{p}$,  $H_{p}\cap \Delta=\{p\}$ and $H_{p}\sim H_i$. 
The strict transform of $H_{p}$, that we write $\tilde{H}_{p}$, satisfies 
$\tilde{H}_{p}\sim \pi^*(H_i)-E_p$. This gives
\begin{equation} \tag{$\heartsuit$}
\tilde{H}_{p}\cdot \gamma=a_i-E_p\cdot \gamma =a_i-\epsilon_p .\label{gammaHp}
\end{equation}

Suppose first that $ \tilde{H}_{p}\cdot \gamma\ge 0$ for each $p\in \Delta$. 
This means (with \eqref{gammaHp}), that $a_i-\epsilon_p\ge 0$ for each 
$i\in \{1,\ldots,r\}$ and each $p\in \Delta_i$. Hence all coefficients in 
\eqref{gammaequiv} are non-negative, so we obtain ~\ref{ConeGen2}. 

Suppose now that $ \tilde{H}_{q}\cdot \gamma< 0$ for some $q\in \Delta$. 
This implies that $\gamma\subset \tilde{H}_q$. As $H_{q}\cap \Delta=\{q\}$, 
we obtain $E_p\cap \tilde{H}_q=\emptyset$ for each $p\in \Delta\setminus \{q\}$, 
which yields  $\epsilon_p=E_p\cdot \gamma=0$. Writing $j\in\{1,\ldots,r\}$ 
the element such that $q\in \Delta_j$, the $j$-th component of 
$\pi(\gamma)\subset (\p^1)^r$ is constant, so 
$a_j=\pi^*(H_j)\cdot \gamma=H_j\cdot \pi(\gamma)=0$. We now prove that
\begin{equation}\tag{$\diamondsuit$}
\gamma\equiv (-\epsilon_q+\sum\limits_{i\not=j} a_i ) e_q
+\sum\limits_{i\not=j} a_i \tilde{\gamma}_{q,i}\label{sumgammaqi}
\end{equation}
Intersecting both sides of \eqref{sumgammaqi} with the divisor $\pi^*(D)$, 
for any divisor $D$ on $(\p^1)^r$, gives 
$\pi(\gamma)\cdot D=\sum a_i\ell_i \cdot D$. Intersecting $E_q$ with both sides 
gives $\epsilon_q=\epsilon_q$, since $E_q\cdot \tilde{\gamma}_{q,i}=1$ 
for each $i\not=j$ (Lemma~\ref{Lemm:gammapj}\ref{gammapjeq}). 
Intersecting with $E_p$ for $p\in \Delta\setminus \{q\}$ gives 
$\epsilon_p=0$. This completes the proof of \eqref{sumgammaqi}.

As the $j$-th component of $\pi(\gamma)\subset (\p^1)^r$ is constant, 
there is an integer $i\in \{1,\ldots,r\}\setminus \{j\}$ such that 
the $i$-th component of $\pi(\gamma)$ is not constant. This implies 
that $\pi(\gamma)\not\subset H_i$, so $\tilde{\gamma}\not\subset \tilde{H}_i$. 
We obtain
\[0\le \tilde{H}_i\cdot \gamma
\stackrel{\text{Lemma~\ref{LemmaHiellj}\ref{tildeHi}}}
=(\pi^*(H_i)-\sum\limits_{s\not=i}\sum\limits_{p\in \Delta_s} E_p)\cdot \gamma
=a_i-\epsilon_q.\]
Hence, the coefficents of \eqref{sumgammaqi} are non-negative, giving \ref{ConeGen3}.
%
%
%
%
%
%
%
%
%
%
%
\end{proof}

\begin{proposition}\label{Prop:Equiv}
Let $\gamma\subset Y$ be an irreducible curve. Then, the following are equivalent:
\begin{enumerate}
\item\label{equiv1}
 For all effective $1$-cycles $\gamma_1,\gamma_2$ on $Y$ such that  
 $\gamma\equiv \gamma_1+\gamma_2$, we have $\gamma_1=0$ or $\gamma_2=0$.
 \item\label{equiv2}
 $\gamma$ is numerically equivalent to $\tilde{\ell}_i$ for some 
 $i\in \{1,\ldots,r\}$, to  $\tilde{\gamma}_{p,i}$ for some 
 $i\in \{1,\ldots,r\}, p\in \Delta\setminus \Delta_i$, or to $e_p$ 
 for some $p\in \Delta$.
 \item\label{equiv3}
 $\gamma$ is either equal to $\tilde{\ell}_i$ for some $i\in \{1,\ldots,r\}$, 
 or equal to  
 $\tilde{\gamma}_{p,i}$ for some $i\in \{1,\ldots,r\}, p\in \Delta\setminus \Delta_i$, 
 or is a line in $E_p$, for some $p\in \Delta$.
 \end{enumerate}
\end{proposition}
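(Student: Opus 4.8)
The plan is to prove the three conditions equivalent by showing \ref{equiv3} $\Rightarrow$ \ref{equiv2} $\Rightarrow$ \ref{equiv1} $\Rightarrow$ \ref{equiv3}, relying throughout on the structural description of irreducible curves in Lemma~\ref{Lemm:ConeGen}. The implication \ref{equiv3} $\Rightarrow$ \ref{equiv2} is immediate: every line in $E_p\simeq\p^{r-1}$ is numerically equivalent to $e_p$, and $\tilde\ell_i$, $\tilde\gamma_{p,i}$ are literally on the list in \ref{equiv2}.

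For \ref{equiv2} $\Rightarrow$ \ref{equiv1}, I would argue by contradiction: suppose $\gamma\equiv\gamma_1+\gamma_2$ with both effective $1$-cycles nonzero, where $\gamma$ is numerically one of the three distinguished classes. Decompose $\gamma_1,\gamma_2$ into irreducible components and apply Lemma~\ref{Lemm:ConeGen} to each. The key tool is the collection of intersection numbers already computed in Lemmas~\ref{LemmaHiellj} and~\ref{Lemm:gammapj}, together with the divisors $\tilde H_i$, $\tilde H_p$, $E_p$: intersecting the putative decomposition against these test divisors must yield consistent non-negativity and additivity constraints. Concretely, for $\gamma\equiv e_p$ one intersects with $E_p$ (giving $-1$) and with $\tilde H_p\sim\pi^*(H_i)-E_p$ and the $\tilde H_j$'s to force every component of $\gamma_1,\gamma_2$ to lie in $E_p$, hence to be a multiple of $e_p$; then the coefficients must be $\{0,1\}$. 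For $\gamma\equiv\tilde\ell_i$ one uses $\tilde H_i\cdot\tilde\ell_i=1$, $\tilde H_i\cdot\tilde\ell_j=-ns_j<0$, and $\tilde\ell_i\cdot E_p\in\{0,1\}$ to rule out any nontrivial splitting, invoking the $s_i$ being distinct (and the hypothesis $s_in\ge 3$ when $r=2$) to avoid numerical coincidences. The case $\gamma\equiv\tilde\gamma_{p,i}$ is handled symmetrically using \ref{gammapjeq} of Lemma~\ref{Lemm:gammapj}. This is the step I expect to be the main obstacle, since it requires a careful bookkeeping of which of the three cases of Lemma~\ref{Lemm:ConeGen} each component can fall into and then eliminating all but the trivial possibilities; the distinctness of the $s_i$ is exactly what is needed to break ties.

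For \ref{equiv1} $\Rightarrow$ \ref{equiv3}, I would take an irreducible curve $\gamma$ that is indecomposable in the sense of \ref{equiv1} and run through the three cases of Lemma~\ref{Lemm:ConeGen}. In Case~\ref{ConeGen1}, $\gamma\equiv de_p$ with $d\ge 1$; indecomposability forces $d=1$, and an irreducible curve of degree $1$ in $E_p\simeq\p^{r-1}$ is a line, giving the third alternative of \ref{equiv3}. In Case~\ref{ConeGen2}, $\gamma\equiv\sum a_i\tilde\ell_i+\sum\mu_p e_p$ with $\sum a_i\ge 1$ and all $a_i\ge 0$; I would show that if two of the $a_i$ are nonzero, or if some $a_i\ge 2$, or if some $\mu_p>0$, then $\gamma$ splits off an effective summand among $\{\tilde\ell_j,e_q,\tilde\gamma_{q,j}\}$ using the explicit classes of Lemma~\ref{Lemm:gammapj} (recall $\tilde\gamma_{q,i}\equiv\tilde\ell_i+\sum_{q'\in\Delta_i}e_{q'}-e_q$ is effective), contradicting \ref{equiv1}; what remains is $\gamma\equiv\tilde\ell_i$ or $\gamma\equiv\tilde\gamma_{p,i}$, and since an irreducible curve of class $\tilde\ell_i$ is forced to map isomorphically to $\ell_i$ (intersection $1$ with $\tilde H_i$) one gets $\gamma=\tilde\ell_i$, and similarly $\gamma=\tilde\gamma_{p,i}$. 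Case~\ref{ConeGen3} reduces to Case~\ref{ConeGen2} after the change of basis $\tilde\gamma_{q,i}\leftrightarrow\tilde\ell_i$, so the same argument applies. The substance here is, again, the subtraction-of-effective-summand argument, and I would organize it as: write down the candidate splitting explicitly, check effectivity of both pieces using the effective cone generated by $\tilde H_i$ and the $E_p$, and intersect against $E_p$'s and $\tilde H_i$'s to confirm it is a genuine numerical decomposition into nonzero effective cycles.

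A small additional point to record: throughout I would use that $Y$, being a blow-up of $(\p^1)^r$ at points, has numerical and linear equivalence of divisors agreeing and the effective cone of divisors generated by $\tilde H_1,\dots,\tilde H_r$ and the $E_p$, so that ``effective'' can be checked on the coefficients in this basis, and dually the cone of curves is generated by $\tilde\ell_1,\dots,\tilde\ell_r$ and the $e_p$ as already noted before Lemma~\ref{LemmaHiellj}; this is what makes the ``subtract an effective summand'' manipulations legitimate.
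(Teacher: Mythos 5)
Your overall plan (decompose via Lemma~\ref{Lemm:ConeGen} and pin everything down with the test classes $\pi^*(H_i)$, $\tilde H_i$, $\tilde H_p$, $E_p$; prove the equivalences in the cyclic order \ref{equiv3}$\Rightarrow$\ref{equiv2}$\Rightarrow$\ref{equiv1}$\Rightarrow$\ref{equiv3} instead of the paper's \ref{equiv1}$\Rightarrow$\ref{equiv2}$\Rightarrow$\ref{equiv3}$\Rightarrow$\ref{equiv1}) is essentially the paper's. But the justification you offer for the decisive step is partly false and partly irrelevant, and the actual maneuver that makes that step work is missing. Your closing claim that the effective cone of divisors of $Y$ is generated by $\tilde H_1,\dots,\tilde H_r$ and the $E_p$, and that the cone of curves is generated by $\tilde\ell_1,\dots,\tilde\ell_r$ and the $e_p$ ``as already noted,'' is wrong on both counts: the paper only states these are \emph{bases} of the relevant groups, and indeed $\tilde\gamma_{q,i}$ is an irreducible curve whose class $\tilde\ell_i+\sum_{q'\in\Delta_i}e_{q'}-e_q$ has a negative $e_q$-coefficient, while for $p\in\Delta_i$ the strict transform of the hypersurface through $p$ with fixed $i$-th coordinate is an effective divisor of class $\pi^*(H_i)-E_p$ not lying in your claimed divisor cone. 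So ``effectivity can be checked on the coefficients in this basis'' is exactly what fails (this failure is the whole reason Case~\ref{ConeGen3} of Lemma~\ref{Lemm:ConeGen} exists), and you cannot lean on it when bookkeeping the components of $\gamma_1,\gamma_2$ in the proof that $\tilde\ell_s$ and $\tilde\gamma_{p,s}$ are indecomposable.

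What actually closes that step (the paper's argument, which only uses the numerical class of $\gamma$, so it serves equally as your \ref{equiv2}$\Rightarrow$\ref{equiv1}) is: first intersect with the nef classes $\pi^*(H_j)$ to arrange, after swapping, that $\pi^*(H_j)\cdot\gamma_2=0$ for all $j$, forcing every component of $\gamma_2$ into the exceptional locus; then apply Lemma~\ref{Lemm:ConeGen} to the components of $\gamma_1$ to write $\gamma_1\equiv\alpha+\beta$ with $\alpha\in\{\tilde\ell_s,\tilde\gamma_{q,s}\}$ and $\beta$ a non-negative sum of $e_p$'s; finally use that curves in exceptional divisors and the $e_p$ have non-positive $E_p$-degree to get $E_p\cdot\gamma\le E_p\cdot\alpha$ for every $p$, which forces $\alpha\equiv\gamma$ and then $\gamma_2=0$. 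None of this uses the distinctness of the $s_i$ or the condition $n\,s_i\ge 3$ for $r=2$; those hypotheses enter only later, in the Theorem, to control automorphisms, so your assertion that distinctness is ``exactly what is needed to break ties'' signals that the key step has not actually been worked out. Two smaller points to repair in your \ref{equiv1}$\Rightarrow$\ref{equiv3}: from $\gamma\equiv e_p$ you must first show $\gamma\subset E_p$ (intersect with $\pi^*(\text{ample})$ to see $\gamma$ is contracted, then with the $E_q$ to identify the divisor) before invoking ``degree one in $\p^{r-1}$ is a line''; and ``intersection $1$ with $\tilde H_i$'' alone does not identify an irreducible curve of class $\tilde\ell_i$ with $\tilde\ell_i$ --- one needs either the negativity $\tilde H_j\cdot\tilde\ell_i=-ns_i<0$ for $j\ne i$ as in the paper, or the conditions $E_p\cdot\gamma=1$ for $p\in\Delta_i$, to force $\pi(\gamma)=\ell_i$.
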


\begin{proof}

$\ref{equiv1}\Rightarrow \ref{equiv2}$: 
By Lemma~\ref{Lemm:ConeGen}, $\gamma\equiv\gamma_1+\cdots+\gamma_s$ 
where $s\ge 1$ and where $\gamma_1,\ldots,\gamma_s$ belong to 
$\{\tilde{\ell}_i\mid i\in \{1,\ldots,r\} \}\cup 
\{e_p\mid p\in \Delta\}\cup \{\tilde{\gamma}_{p,i}
\mid i\in \{1,\ldots,r\},p\in \Delta\setminus \Delta_i\}$. 
As \ref{equiv1} is satisfied, we have $s=1$, which implies \ref{equiv2}.

$\ref{equiv2}\Rightarrow \ref{equiv3}$: 
Suppose first that $\gamma\equiv e_p$ for some $p\in \Delta$. 
For an ample divisor $D$ on $(\p^1)^r$, we have 
$0=e_p\cdot \pi^*(D)=\pi_*(\gamma)\cdot D$, which implies that 
$\gamma$ is contracted by $\pi$. Hence, $\gamma$ is a curve 
of degree $d\ge 1$ in some $E_q$, $q\in \Delta$, and is thus 
equivalent to $d e_q$. As $-1=E_p\cdot e_p=E_p\cdot \gamma$, 
we have $q=p$ and $d=1$.

Suppose now that $\gamma\equiv \tilde{\ell}_i$ for some $i\in \{1,\ldots,r\}$. 
For each $j\in \{1,\ldots,r\}$ with $j\not=i$, we have 
$\tilde{H}_i\cdot \gamma=\tilde{H}_i\cdot \tilde{\ell}_j
\stackrel{\text{Lemma~\ref{LemmaHiellj}\ref{tildeHiellj}}}{=}-n s_j<0$. 
Hence, $\pi(\gamma)\subset \bigcap_{j\not=i } H_j=\ell_i$. 
As $\pi(\gamma)\cdot H_i=\pi^*(H_i)\cdot \gamma=\pi^*(H_i)\cdot \tilde{\ell}_i=1$, 
we have $\pi(\gamma)=\ell_i$ and $\tilde{\gamma}=\tilde{\ell}_i$.

In the remaining case, $\gamma\equiv\tilde{\gamma}_{p,i}$ for some 
$i\in \{1,\ldots,r\}$ and some $p\in \Delta\setminus \Delta_i$. 
Hence, $\pi(\gamma)$ is numerically equivalent to $\pi(\tilde{\gamma}_{p,i})$, 
which is equivalent to $\ell_i$ (Lemma~\ref{Lemm:gammapj}\ref{gammapjeq}). 
Hence, all coordinates of $\pi(\gamma)$ except the $i$-th one are constant. 
As $\gamma\cdot E_p=\tilde{\gamma}_{p,i}\cdot E_p=1$ 
(again by Lemma~\ref{Lemm:gammapj}\ref{gammapjeq}), 
the point $p$ belongs to both $\pi(\gamma)$ and ${\gamma}_{p,i}$, 
which yields $\pi(\gamma)={\gamma}_{p,i}$ and thus $\gamma=\tilde{\gamma}_{p,i}$.

$\ref{equiv3}\Rightarrow \ref{equiv1}$: We take effective $1$-cycles 
$\gamma_1,\gamma_2$ on $Y$ such that  $\gamma\equiv \gamma_1+\gamma_2$ 
and prove that one of the two is zero, using $\ref{equiv3}$. 

For each $i\in \{1,\ldots,r\}$, we write $a_i=\pi^*(H_i)\cdot \gamma$, 
$b_i=\pi^*(H_i)\cdot \gamma_1$ and $c_i=\pi^*(H_i)\cdot \gamma_2$ 
and obtain $a_i=b_i+c_i$.  As $H_i$ is nef, $\pi^*(H_i)$ is nef, so 
$a_i,b_i,c_i\ge 0$. Moreover, $\gamma$ satisfying $\ref{equiv3}$, 
we have $\sum_{i=1}^r a_i=1$, which implies that, up to exchanging 
$\gamma_1$ and $\gamma_2$, we may assume that 
$\sum_{i=1}^r a_i=\sum_{i=1}^r b_i$ and 
$c_i=0$ for $i = 1, \ldots,r$. 
In particular, $\gamma_2$ is a sum of irreducible curves contained in 
the exceptional divisors $E_p$, $p\in \Delta$. 

Suppose first that $\gamma=e_q$ for some $q\in \Delta$. This gives 
$\sum_{i=1}^r a_i=\sum_{i=1}^r b_i = 0$, 
which implies that both $\gamma_1$ and $\gamma_2$ are  
sums of irreducible curves contained in 
the exceptional divisors $E_p$, $p\in \Delta$. 
For each $p'\in \Delta$ and each irreducible curve $c\subset E_{p'}$ 
of degree $d\ge 1$ we get $\sum_{p\in \Delta} E_p\cdot c=-d$. As 
$\sum_{p\in \Delta} E_p\cdot \gamma=-1$, this gives $\gamma_1=0$ or $\gamma_2=0$.

We may now take $s\in \{1,\ldots,r\}$ and either $\gamma=\tilde{\ell}_s$ or  
$\gamma=\tilde{\gamma}_{p,s}$ for some $p\in \Delta\setminus \Delta_s$. 
This gives $b_s=1$ and $b_i=0$ for all 
$i\in \{1,\ldots,r\}\setminus \{s\}$. 
Lemma~\ref{Lemm:ConeGen} implies that $\gamma_1$ is
equivalent to a sum of curves contained in 
$\{\tilde{\ell}_i\mid i\in \{1,\ldots,r\} \}\cup 
\{e_p\mid p\in \Delta\}\cup \{\tilde{\gamma}_{p,i}
\mid i\in \{1,\ldots,r\},p\in \Delta\setminus \Delta_i\}$. 
As $b_s=1$ and $b_i=0$ for all $i\in \{1,\ldots,r\}\setminus \{s\}$, 
we have $\gamma_1\equiv \alpha+\beta$, where $\alpha$ is either equal to 
$\tilde{\ell}_s$ or $\tilde{\gamma}_{p,s}$ for some 
$p\in \Delta\setminus \Delta_s$ and where $\beta$ is a non-negative sum of 
$e_p, p\in \Delta$. For each $p\in \Delta$, we obtain 
\[E_p\cdot \gamma =
E_p\cdot \alpha +E_p\cdot \beta+E_p\cdot \gamma_2\le E_p\cdot \alpha.\]
We now use the fact that we know the  intersection of $\alpha$ and $\gamma$ 
with $E_p$ (which is given either by 
Lemma~\ref{LemmaHiellj}\ref{tildeelliDelta}
or by Lemma~\ref{Lemm:gammapj}\ref{gammapjeq}, depending if the curve is equal 
to $\tilde{\ell}_s$ or $\tilde{\gamma}_{p,s}$).

If $\gamma=\tilde{\gamma}_{p,s}$ for some $p\in \Delta\setminus \Delta_s$, 
then $1=E_p\cdot \gamma\le E_p\cdot \alpha$, which implies that 
$\alpha=\tilde{\gamma}_{p,s}$. If $\gamma=\tilde{\gamma}_s$, then 
$1=E_q\cdot \gamma\le E_q\cdot \alpha$ for each $q\in \Delta_s$, 
which implies that $\alpha=\tilde{\gamma}_s$. In both cases, we get 
$\alpha=\gamma$, which implies that $E_p\cdot \gamma_2=0$ for each 
$p\in \Delta$, and thus that $\gamma_2=0$, as desired.
\end{proof}

\begin{theorem}
The map $G \to \Aut_Y$ is an isomorphism.
\end{theorem}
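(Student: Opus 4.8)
The plan is to establish two facts: that $\Aut(Y)=G$ as abstract groups, and that $\Lie(\Aut_Y)=H^0(Y,T_Y)=0$. Granting both, $\Aut_Y$ is a group scheme over $\k$ with trivial Lie algebra, hence étale, hence — as $\k$ is algebraically closed — the constant group scheme on $\Aut_Y(\k)=\Aut(Y)=G$, so $G\to\Aut_Y$ is an isomorphism. Injectivity of $G\to\Aut_Y$ is clear since $G$ acts faithfully on $(\p^1)^r$ and this action lifts to $Y$. For the vanishing of $H^0(Y,T_Y)$: since $\pi\colon Y\to(\p^1)^r$ is the blow-up of the reduced finite set $\Delta$, we have $H^0(Y,T_Y)=H^0((\p^1)^r,T_{(\p^1)^r}\otimes\cI_\Delta)$, the space of vector fields vanishing along $\Delta$. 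Writing such a field as a tuple $(\xi_1,\dots,\xi_r)$ with $\xi_i\in H^0(\p^1,T_{\p^1})$, vanishing along $\Delta_j$ for some $j\ne i$ (possible since $r\ge 2$ and $\Delta_j\ne\emptyset$) forces $\xi_i([0:1])=0$, and vanishing along $\Delta_i$ forces $\xi_i$ to vanish at the $ns_i\ge 2$ further points of $\Delta_i$, all distinct from $[0:1]$; since a nonzero section of $T_{\p^1}$ has at most two zeros, $\xi_i=0$. It remains to prove $\Aut(Y)=G$.

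So fix $\varphi\in\Aut(Y)$. Property~\ref{equiv1} of Proposition~\ref{Prop:Equiv} is invariant under $\varphi$, as it only involves numerical equivalence and effectivity of $1$-cycles, so by that proposition $\varphi_*$ permutes the pairwise distinct classes in $S=\{[\tilde{\ell}_i]\}\cup\{[\tilde{\gamma}_{p,i}]\}\cup\{[e_p]\}$. The crucial claim is that $\varphi$ preserves the set $\{E_p:p\in\Delta\}$ of exceptional divisors. If $r\ge 3$, this follows from $\varphi^*K_Y\equiv K_Y$ together with the computation (via Lemmas~\ref{LemmaHiellj} and~\ref{Lemm:gammapj}) that $K_Y\cdot e_p=1-r<0$, $K_Y\cdot\tilde{\gamma}_{p,i}=r-3\ge 0$, and $K_Y\cdot\tilde{\ell}_i=(r-1)ns_i-2>0$: thus $\varphi_*$ preserves $\{[e_p]\}$ (the classes of negative $K_Y$-degree in $S$), and since every irreducible curve of class $[e_p]$ is a line in $E_p$ (Proposition~\ref{Prop:Equiv}) while lines cover $E_p$, $\varphi$ carries $E_p$ onto some $E_q$. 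If $r=2$, then $Y$ is a smooth rational surface whose $(-1)$-curves are precisely the $E_p$ and the $\tilde{\gamma}_{p,i}$ (the $\tilde{\ell}_i$ have self-intersection $-ns_i\le-3$), and $\varphi$ permutes these $(-1)$-curves preserving their mutual intersections; since in that intersection graph each $E_p$ has valence $1$ while each $\tilde{\gamma}_{p,i}$ has valence $1+ns_{i'}\ge 4$, again $\varphi$ permutes the $E_p$.

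Thus $\varphi(\bigcup_pE_p)=\bigcup_pE_p$, so $\varphi$ restricts to an automorphism $\psi$ of $Y\setminus\bigcup_pE_p\cong(\p^1)^r\setminus\Delta$. Because $\Delta$ has codimension $r\ge 2$ in the smooth projective variety $(\p^1)^r$ and $-K_{(\p^1)^r}=\cO(2,\dots,2)$ is very ample with $H^0((\p^1)^r\setminus\Delta,\cO(2m,\dots,2m))=H^0((\p^1)^r,\cO(2m,\dots,2m))$ for all $m\ge 0$, the automorphism $\psi$ preserves the (restricted) anticanonical class and hence acts on the anticanonical ring, so it extends to $\bar\varphi\in\Aut((\p^1)^r)$ with $\pi\circ\varphi=\bar\varphi\circ\pi$ and $\bar\varphi(\Delta)=\Delta$.

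Now $\Aut((\p^1)^r)=(\PGL_2)^r\rtimes S_r$. The curve $\bar\varphi(\ell_i)$ is irreducible and numerically equivalent to some $\ell_k$, hence is a ``coordinate curve''; a coordinate curve other than $\ell_k$ meets $\Delta=\biguplus_j\Delta_j$ in at most one point, whereas $\lvert\bar\varphi(\Delta_i)\rvert=\lvert\Delta_i\rvert=ns_i\ge 2$, so $\bar\varphi(\ell_i)=\ell_{\sigma_0(i)}$ for a permutation $\sigma_0$ of $\{1,\dots,r\}$; then $\bar\varphi(\Delta_i)=\Delta_{\sigma_0(i)}$ gives $ns_i=ns_{\sigma_0(i)}$, and since the $s_i$ are pairwise distinct, $\sigma_0=\id$. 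Hence $\bar\varphi=(g_1,\dots,g_r)\in(\PGL_2)^r$ with $\bar\varphi(\ell_i)=\ell_i$ for all $i$, which forces each $g_j$ to fix $[0:1]$ and each $g_i$ to preserve $\Delta_i$, whence $g_i\in\mu_n$ by the choice of $\Delta$ (the hypothesis on the group $H$), i.e.\ $\bar\varphi\in G$. So $\varphi\mapsto\bar\varphi$ is a retraction of $G\hookrightarrow\Aut(Y)$, injective because the relative automorphism group of the blow-up $\pi$ along smooth points is trivial; therefore $\Aut(Y)=G$, and combined with $\Lie(\Aut_Y)=0$ this gives $\Aut_Y=G$. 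The step requiring most care is the one in the second paragraph isolating the exceptional divisors inside the set of indecomposable curves — in particular the case $r=2$, where $K_Y$ no longer separates the relevant classes and one must instead exploit the combinatorics of the $(-1)$-curves.
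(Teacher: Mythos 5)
Your proposal is correct and its second half coincides in substance with the paper's argument: once you know that an automorphism $\varphi$ of $Y$ permutes the exceptional divisors $E_p$, you descend it to an automorphism of $(\p^1)^r$ preserving $\Delta$ (your extension via the anticanonical ring across the codimension~$\ge 2$ set $\Delta$ is a careful justification of a step the paper treats tersely), pin it down to $G$ using the distinctness of the $s_i$ and the hypothesis on $H$, and kill $\Lie(\Aut_Y)$ by identifying it with vector fields on $(\p^1)^r$ vanishing on $\Delta$ (the paper proves the identification $H^0(Y,\cT_Y)\simeq H^0(\cT_{(\p^1)^r}\otimes\cI_\Delta)$ via the normal-bundle sequence and phrases the vanishing with $2\times 2$ matrices and eigenvectors; your ``a nonzero section of $\cT_{\p^1}$ has at most two zeros, and $\xi_i$ vanishes at $[0:1]$ and at the $ns_i\ge 2$ points of $\Delta_i$'' is the same computation). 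Where you genuinely diverge is the key step isolating the $E_p$ among the indecomposable curves of Proposition~\ref{Prop:Equiv}: the paper works with the union $F$ of all such curves and distinguishes the $E_p$ among the irreducible components of $F$ by dimension ($r\ge 3$) or by counting how many other components each one meets ($r=2$, using $ns_i\ge 3$), whereas you use the $K_Y$-degree for $r\ge 3$ (indeed $K_Y\cdot e_p=1-r<0$, $K_Y\cdot\tilde{\gamma}_{p,i}=r-3\ge 0$, $K_Y\cdot\tilde{\ell}_i=(r-1)ns_i-2>0$, which is a clean alternative) and the $(-1)$-curve graph for $r=2$.

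The one place where you owe an argument is the $r=2$ case: you assert that the $(-1)$-curves of $Y$ are exactly the $E_p$ and the $\tilde{\gamma}_{p,i}$. This is true, but it needs more than the remark $\tilde{\ell}_i^2=-ns_i\le -3$: one must also exclude strict transforms of irreducible curves of bidegree $(a,b)$ with $a,b\ge 1$. This follows from adjunction on $\p^1\times\p^1$ together with the bounds $\sum_{p\in\Delta_1}m_p\le C\cdot\ell_1=b$ and $\sum_{p\in\Delta_2}m_p\le a$: a $(-1)$-strict transform would force $\sum_p m_p\ge 2a+2b-1>a+b$, a contradiction, so only ruling fibers and exceptional divisors can give $(-1)$-curves, and among ruling fibers exactly the $\gamma_{p,i}$ do (by Lemma~\ref{Lemm:gammapj}), while $\ell_i$ is excluded by $ns_i\ge 3$. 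Without this classification, an unaccounted $(-1)$-curve of valence $1$ in your graph could a priori be exchanged with some $E_p$, so the step as written has a gap, though an easily repairable one; alternatively, running your valence count inside the configuration of indecomposable curves, as the paper does, avoids classifying $(-1)$-curves altogether.
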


\begin{proof}
We first show that $G \stackrel{\sim}{\to} \Aut(Y)$. 
Let $\alpha\in \Aut(Y)$. For each irreducible curve $\gamma\subset Y$ 
that satisfies Proposition~\ref{Prop:Equiv}\ref{equiv1}, the curve 
$\alpha(\gamma)$ also satisfies Proposition~\ref{Prop:Equiv}\ref{equiv1}. 
Hence, the union $F\subset Y$ of all curves satisfying this assertion 
is also stable by $\Aut(Y)$.

By Proposition~\ref{Prop:Equiv}, we have
\[F=(\bigcup_{p\in \Delta } E_p) \cup (\bigcup_{i=1}^r \tilde{\ell}_i) \cup 
(\bigcup_{i=1}^r (\bigcup_{p\in \Delta\setminus \Delta_i}\tilde{\gamma}_{p,i})).\] 
We observe that the above union is the decomposition of $F$ 
into irreducible components. Hence, $\alpha$ permutes the irreducible components.  
We now make the following observations:
\begin{enumerate}
\item
For each $i\in \{1,\ldots,r\}$, $\tilde{\ell}_i$ intersects exactly 
$n\cdot s_i$ other irreducible components of $F$, namely the $E_p$ with 
$p\in \Delta$.
\item
For each $p\in \Delta_i$, the divisor $E_p$ intersects exactly $r$ other 
irreducible components of $F$, namely the curve $\tilde{\ell}_i$  and 
the curves $\tilde{\gamma}_{p,j}$ with $j\in \{1,\ldots,r\}\setminus \{i\}$.
\item
For each 
$i\in \{1,\ldots,r\}$ and $p \in \Delta \setminus \Delta_i$, the curve
$\tilde{\gamma}_{p,i}$ intersects exactly $n\cdot s_i+1$ other  
irreducible components of $F$. Writing $j\in \{1,\ldots,r\}$ 
the element such that $p\in \Delta_j$, the curve intersects $E_p$ 
and all curves $\tilde{\gamma}_{q,j}$ for each $q\in \Delta_i$.
\end{enumerate}

If $r\ge 3$, the exceptional divisors $E_p$ are the irreducible components 
of maximal dimension of $F$, so $g$ permutes them. If $r=2$, then 
$g$ also permutes the $E_p$, as these are the only irreducible components 
of $F$ that intersect exactly $2$ other irreducible components of $F$ 
(we assumed $n\cdot s_i\ge 3$ for each $i$ in the case $r=2$). 
In any case, $g$ permutes the exceptional divisors $E_p$ and is thus 
the lift of an automorphism $\hat{g}$ of $(\p^1)^r$: we observe that 
the birational self-map $\hat{g}=\pi g \pi^{-1}$ of $(\p^1)^r$
restricts to an automorphism on the complement of $\Delta$, and as 
$\Delta$ has codimension $\ge 2$, $\hat{g}$ is an automorphism. 
We then use again the three observations above to see that 
$g(\tilde{\ell}_i)=\tilde{\ell}_i$ for each $i\in \{1,\ldots,r\}$, 
as the $s_i$ are all distinct. Hence, 
$\hat{g}(\ell_i)=\hat{g}(\ell_i)$ for each $i$. 
This implies that $\hat{g}$ is of the form
 \[\begin{array}{ccc}
(\p^1)^r&\to & (\p^1)^r\\
( (\mu_1,\ldots,\mu_r),)&\mapsto & 
([ u_1:\mu_1 v_1+\kappa_1u_1],\ldots,[ u_r:\mu_r v_r+\kappa_ru_r])
\end{array}\]
for some $\mu_1,\ldots,\mu_r\in \k^*$ and $\kappa_1,\ldots,\kappa_r\in \k$.

For each $i\in \{1,\ldots,r\}$, the restriction of $\hat{g}$ to $\ell_i$ 
corresponds to the automorphism $[u:v]\mapsto [ u_i:\mu_i v_1+\kappa_iu_i]$. 
As it has to stabilize the set $\Delta_i$, we have $\kappa_i=0$ and 
$\mu_i\in\k^*$ is of order $n$. 
This yields the isomorphism $G \simeq \Aut(Y)$. 

To complete the proof, it suffices to show that $\Aut_Y$ is constant,
or equivalently that its Lie algebra is trivial. 
(We refer to \cite[\S 2.1]{Martin} for background on infinitesimal 
automorphisms and vector fields).
Recall that $\Lie(\Aut_Y) = H^0(Y,\cT_Y)$, where $\cT_Y$ denotes the
tangent sheaf. In other terms, $\Lie(\Aut_Y)$ consists of the 
global vector fields on $Y$. Denoting by 
$E = \uplus_{p \in \Delta} E_p$ the exceptional divisor, we have 
an exact sequence of sheaves on $Y$
\[ 0 \longrightarrow \cT_{Y,E} \longrightarrow \cT_Y 
\longrightarrow \bigoplus_{p \in \Delta} 
(i_{E_p})_*(\cN_{E_p/Y}) \longrightarrow 0, \]
where $\cT_{Y,E}$ is the sheaf of vector fields that are tangent to $E$,
and $\cN_{E_p/Y}$ denotes the normal sheaf. 
Moreover, for any $p \in \Delta$, we have 
$E_p \simeq \bP^{r-1}$ and this identifies $\cN_{E_p/Y}$ with 
$\cO_{\bP^{r-1}}(-1)$; thus, $H^0(E_p,\cN_{E_p/Y}) = 0$. 
As a consequence, 
$H^0(Y,\cT_{Y,E}) \stackrel{\sim}{\to} H^0(Y,\cT_Y)$. 
Viewing vector fields as derivations of the structure sheaf $\cO_Y$,
this yields 
\[ \Der(\cO_Y,\cO_Y(-E)) \stackrel{\sim}{\to} \Der(\cO_Y), \]
where the left-hand side denotes the Lie algebra of derivations
which stabilize the ideal sheaf of $E$.

The blow-up $\pi : Y \to (\bP^1)^r$ contracts $E$ to $\Delta$ and 
satisfies $\pi_*(\cO_Y) = \cO_{(\bP^1)^r}$; also, 
$\pi_*(\cO_Y(-E)) = \cI_{\Delta}$ (the ideal sheaf of $\Delta$).
So $\pi$ induces a homomorphism of Lie algebras
$\pi_* : \Der(\cO_Y) \to \Der(\cO_{(\bP^1)^r})$,
which is injective as $\pi$ is birational. Moreover, $\pi_*$
sends $\Der(\cO_Y,\cO_Y(-E))$ into
$\Der(\cO_{(\bP^1)^r}, \cI_{\Delta})$, the Lie algebra of
vector fields on $(\bP^1)^r$ which vanish at each $p \in \Delta$.
So it suffices to show that each such vector field is zero.

We have
\[ \Der(\cO_{(\bP^1)^r}) = H^0((\bP^1)^r, \cT_{(\bP^1)^r}) 
 = \bigoplus_{i=1}^r H^0(\bP^1,\cT_{\bP^1}) = \Lie(\Aut_{\bP^1})^r. 
\]
Moreover, $\Lie(\Aut_{\bP^1}) = M_2(\k)/\k \, \id$, the quotient of
the Lie algebra of $2 \times 2$ matrices by the scalar matrices. 
Let $\xi = (\xi_1,\ldots,\xi_r) \in \Der(\cO_{(\bP^1)^r})$,
with representative $(A_1,\ldots,A_r) \in M_2(\k)^r$. Then $\xi$
vanishes at $p = ([x_1:y_1],\ldots,[x_r:y_r])$ if and only if 
$(x_i,y_i)$ is an eigenvector of $A_i$ for each $i \in \{ 1,\ldots,r\}$. 
Thus, if $\xi \in \Der(\cO_{(\bP^1)^r},\cI_{\Delta})$, then
$(0,1)$ is an eigenvector of each $A_i$, i.e., $A_i$ is lower
triangular. In addition, each point of $\Delta_i$ yields an
eigenvector of $A_i$. So each $A_i$ is scalar, and $\xi = 0$
as desired.
\end{proof}

\bibliographystyle{amsalpha}

\end{document}